\documentclass[reqno,a4paper]{amsart}
\usepackage{amssymb,graphicx,setspace}
\usepackage{ifpdf}
\ifpdf
 \usepackage[hyperindex]{hyperref}
\else
 \expandafter\ifx\csname dvipdfm\endcsname\relax
 \usepackage[hypertex,hyperindex]{hyperref}%
 \else
 \usepackage[dvipdfm,hyperindex]{hyperref}%
 \fi
\fi
\theoremstyle{plain}
\newtheorem{thm}{Theorem}[section]
\newtheorem{lem}{Lemma}[section]

\newtheorem{prop}{Proposition}[section]
\newtheorem{conj}{Conjecture}[section]
\newtheorem{open}{Problem}[section]
\theoremstyle{definition}
\newtheorem{dfn}{Definition}[section]
\theoremstyle{remark}
\newtheorem{rem}{Remark}[section]
\allowdisplaybreaks[4]
\DeclareMathOperator{\td}{d\mspace{-2mu}}
\DeclareMathOperator{\diag}{diag}
\newcommand{\cmdeg}[1]{\sideset{}{_\mathrm{cm}^{#1}}\deg}
\newcommand{\cbdeg}[1]{\sideset{}{_\mathrm{cb}^{#1}}\deg}
\newcommand{\stdeg}[1]{\sideset{}{_\mathrm{s}^{#1}}\deg}
\newcommand{\opdeg}[1]{\sideset{}{_\mathrm{op}^{#1}}\deg}
\numberwithin{equation}{section}

\begin{document}

\title[Integral representations of some functions]
{Integral representations and properties of some functions involving the logarithmic function}

\author[F. Qi]{Feng Qi}
\address[Qi]{Department of Mathematics, School of Science, Tianjin Polytechnic University, Tianjin City, 300387, China; Institute of Mathematics, Henan Polytechnic University, Jiaozuo City, Henan Province, 454010, China}
\email{\href{mailto: F. Qi <qifeng618@gmail.com>}{qifeng618@gmail.com}, \href{mailto: F. Qi <qifeng618@hotmail.com>}{qifeng618@hotmail.com}, \href{mailto: F. Qi <qifeng618@qq.com>}{qifeng618@qq.com}}
\urladdr{\url{http://qifeng618.wordpress.com}}

\author[W.-H. Li]{Wen-Hui Li}
\address[Li]{Department of Mathematics, School of Science, Tianjin Polytechnic University, Tianjin City, 300387, China}
\email{\href{mailto: W.-H. Li <wen.hui.li@foxmail.com>}{wen.hui.li@foxmail.com}, \href{mailto: W.-H. Li <wen.hui.li102@gmail.com>}{wen.hui.li102@gmail.com}}

\begin{abstract}
By using Cauchy integral formula in the theory of complex functions, the authors establish some integral representations for the principal branches of several complex functions involving the logarithmic function, find some properties, such as being operator monotone function, being complete Bernstein function, and being Stieltjes function, for these functions, and verify a conjecture on complete monotonicity of a function involving the logarithmic function.
\end{abstract}

\keywords{Bernstein function; Cauchy integral formula; complete Bernstein function; completely monotonic function; conjecture; degree; integral representation; L\'evy-Khintchine representation; logarithmic function; logarithmically completely monotonic function; operator monotone function; principal branch; property; Stieltjes function}

\subjclass[2010]{Primary 30E20; Secondary 26A12, 26A48, 33B99, 44A20}

\thanks{This paper was typeset using \AmS-\LaTeX}

\maketitle

\section{Preliminaries}

We recall some definitions, notion, and characterizations.

\begin{dfn}[{\cite[Chapter~IV]{Widder-Laplace-Transform-41}}]
An infinitely differentiable function $f$ on an interval $I$ is said to be completely monotonic on $I$ if it satisfies
\begin{equation*}
(-1)^{n-1}f^{(n-1)}(t)\ge0
\end{equation*}
for $x \in I$ and $n\in\mathbb{N}$, where $\mathbb{N}$ stands for the set of all positive integers.
\end{dfn}

For our own convenience, we denote the class of all completely monotonic functions on an interval $I$ by the notation $\mathcal{C}[I]$.
The class $\mathcal{C}[(0,\infty)]$ may be characterized by

\begin{prop}[{\cite[Theorem~12b]{Widder-Laplace-Transform-41}}]
A necessary and sufficient condition that $f(x)$ should be completely monotonic for $0<x<\infty$ is that
\begin{equation} \label{berstein-1}
f(x)=\int_0^\infty e^{-xt}\td\alpha(t),
\end{equation}
where $\alpha(t)$ is non-decreasing and the integral converges for $0<x<\infty$.
\end{prop}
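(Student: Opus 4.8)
This is the classical Hausdorff--Bernstein--Widder characterization, and I would prove it in the two standard halves: the sufficiency is routine, and the necessity is the crux.

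\textbf{Sufficiency.} Assume $f(x)=\int_0^\infty e^{-xt}\,\td\alpha(t)$ with $\alpha$ non-decreasing and the integral convergent for every $x>0$. Fix $0<a\le x$; since the integral converges at $a/2$ we have $\int_0^\infty e^{-(a/2)t}\,\td\alpha(t)<\infty$, and because $t^{m}e^{-xt}\le t^{m}e^{-at}\le\bigl(\sup_{t\ge0}t^{m}e^{-(a/2)t}\bigr)e^{-(a/2)t}$ the function $t\mapsto t^{m}e^{-xt}$ is $\td\alpha$-integrable, uniformly for $x$ in compact subsets of $(0,\infty)$. Hence one may differentiate under the integral sign repeatedly to obtain $f^{(m)}(x)=\int_0^\infty(-t)^{m}e^{-xt}\,\td\alpha(t)$ for every $m\ge0$, so that $(-1)^{n-1}f^{(n-1)}(x)=\int_0^\infty t^{n-1}e^{-xt}\,\td\alpha(t)\ge0$ for all $n\in\mathbb{N}$, i.e.\ $f\in\mathcal{C}[(0,\infty)]$.

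\textbf{Necessity.} Now assume $f\in\mathcal{C}[(0,\infty)]$. I would invoke the Post--Widder inversion operator: for $k\in\mathbb{N}$ and $u>0$ set $L_{k,u}[f]=\frac{(-1)^k}{k!}\bigl(\frac{k}{u}\bigr)^{k+1}f^{(k)}\bigl(\frac{k}{u}\bigr)$, and define $\alpha_k(t)=\int_\varepsilon^t L_{k,u}[f]\,\td u$ for $t\ge\varepsilon$ with a fixed small $\varepsilon>0$, extended by $\alpha_k\equiv0$ on $(0,\varepsilon)$. Since $(-1)^kf^{(k)}\ge0$, each $\alpha_k$ is non-decreasing. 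The key analytic step is to prove that $\{\alpha_k(t):k\in\mathbb{N}\}$ is bounded for each fixed $t$; here one uses that $(-1)^kf^{(k)}$ is itself non-increasing (its derivative is $(-1)^kf^{(k+1)}=-(-1)^{k+1}f^{(k+1)}\le0$) together with Taylor-type estimates to bound $\alpha_k(t)$ in terms of $f(\varepsilon)$ and $t$ alone. Helly's selection theorem then yields a subsequence $\alpha_{k_j}$ converging pointwise to a non-decreasing function; letting $\varepsilon\to0$ along a sequence and diagonalizing produces a non-decreasing $\alpha$ on $(0,\infty)$. Finally, a direct computation with the kernel (Widder's basic lemma on the Post--Widder operator) gives $\int_0^\infty e^{-xt}\,\td\alpha_{k_j}(t)\to f(x)$ for each fixed $x>0$, and passing to the limit --- after checking uniform integrability of the tails in $t$ --- yields $f(x)=\int_0^\infty e^{-xt}\,\td\alpha(t)$; convergence of this integral for every $x>0$ is then automatic since $f(x)<\infty$.

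\textbf{Main obstacle.} The delicate points are all on the necessity side: obtaining the uniform bound on $\alpha_k(t)$ required for Helly's theorem, and justifying the interchange of limit and integral, which amounts to controlling the mass of the $\alpha_{k_j}$ near $t=0$ (the limiting $\alpha$ may well be unbounded there, as already happens for $f(x)=1/x$ with $\alpha(t)=t$). An alternative route avoiding the inversion operator is to observe that, for fixed $a>0$ and $h>0$, the sequence $n\mapsto f(a+nh)$ is a completely monotonic sequence --- its $k$-th forward difference equals a $k$-fold iterated integral of $(-1)^kf^{(k)}$ and hence has the right sign --- so by the solution of the Hausdorff moment problem on $[0,1]$ it equals $\int_0^1 s^{n}\,\td\mu_{a,h}(s)$; the substitution $s=e^{-h\tau}$ turns this into $\int_0^\infty e^{-nh\tau}\,\td\nu_{a,h}(\tau)$, and letting $h\to0^+$ and then $a\to0^+$, using Helly's theorem and uniqueness of the Laplace transform to glue the pieces, gives the representation. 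Either way, the compactness argument and the identification of the limiting measure are the substance of the proof.
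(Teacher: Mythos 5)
The paper itself offers no proof of this proposition: it is quoted, with attribution, from Widder's book (Theorem 12b of Chapter IV), so there is no internal argument to compare yours against; your text has to be judged as a free-standing proof of the Hausdorff--Bernstein--Widder theorem. Your sufficiency half is complete and correct: for $x\ge a>0$ the domination $t^{m}e^{-xt}\le\bigl(\sup_{s\ge0}s^{m}e^{-(a/2)s}\bigr)e^{-(a/2)t}$, together with convergence of the representing integral at $a/2$, legitimises repeated differentiation under the integral sign and yields $(-1)^{n-1}f^{(n-1)}(x)=\int_0^\infty t^{n-1}e^{-xt}\td\alpha(t)\ge0$. Your necessity half correctly identifies the two classical routes --- the Post--Widder operator $L_{k,u}[f]$ with Helly selection, or reduction to the Hausdorff moment problem through the completely monotonic sequences $n\mapsto f(a+nh)$ --- and either route does lead to the theorem; the latter is essentially the development in the source the paper cites.

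As written, however, the necessity half is a programme rather than a proof: the steps you yourself label ``the main obstacle'' are exactly the ones left unproved. You assert, without producing an inequality, that ``Taylor-type estimates'' bound $\alpha_k(t)=\int_\varepsilon^t L_{k,u}[f]\,\td u$ uniformly in $k$; the standard device is $\alpha_k(t)\le e^{xt}\int_0^\infty e^{-xu}L_{k,u}[f]\,\td u$ plus a lemma bounding the last integral by values of $f$, and this is where the specific structure of $L_{k,u}$ must actually be used. You likewise assert that $\int_0^\infty e^{-xt}\td\alpha_{k_j}(t)\to f(x)$ ``by a direct computation with the kernel'', but that convergence is the analytic heart of the Post--Widder approach and needs a genuine argument (concentration of the kernel $u^{k-1}e^{-ku/t}$ near $u=t$, or the corresponding lemma in Widder). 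Finally, the interchange of limit and integral requires the tail estimate $\int_T^\infty e^{-xt}\td\alpha_{k_j}(t)\le e^{-(x-x_0)T}\int_0^\infty e^{-x_0t}\td\alpha_{k_j}(t)$ for $0<x_0<x$, and an explicit treatment of mass escaping to $t=0^+$: untreated, it leaves you with $f(x)=c+\int_0^\infty e^{-xt}\td\alpha(t)$ for an unidentified constant $c\ge0$ that has to be absorbed into a jump of $\alpha$ at the origin. None of what you propose is false, and the Hausdorff-moment alternative faces the same compactness and gluing issues you mention; but until these estimates are carried out the proposal is an accurate outline of the known proof, not a proof.
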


\begin{dfn}[\cite{psi-proper-fraction-degree-two.tex, simp-exp-degree-ext-gjma.tex, Qi-Zhang-Li-MJM-14}]
Let $f(x)$ be a completely monotonic function on $(0,\infty)$ and denote $f(\infty)=\lim_{x\to\infty}f(x)$. If for some $r\in\mathbb{R}$ the function
\begin{equation*}
x^r[f(x)-f(\infty)]
\end{equation*}
is completely monotonic on $(0,\infty)$ but
\begin{equation*}
x^{r+\varepsilon}[f(x)-f(\infty)]
\end{equation*}
is not for any positive number $\varepsilon>0$, then we say that the number $r$ is the completely monotonic degree of $f(x)$ with respect to $x\in(0,\infty)$; if for all $r\in\mathbb{R}$ each and every $x^r[f(x)-f(\infty)]$ is completely monotonic on $(0,\infty)$, then we say that the completely monotonic degree of $f(x)$ with respect to $x\in(0,\infty)$ is $\infty$.
\end{dfn}

\begin{rem}
It was proved in~\cite[Section~1.5]{Bessel-ineq-Dgree-CM.tex} that the completely monotonic degree $\cmdeg{x}[f(x)]$ equals $\infty$ if and only if $f(x)$ is nonnegative and identically constant.
\end{rem}

For convenience and simplicity, the notation
\begin{equation*}
\cmdeg{x}[f(x)]
\end{equation*}
was designed in~\cite[p.~9890]{psi-proper-fraction-degree-two.tex} to denote the completely monotonic degree of $f(x)$ with respect to $x\in(0,\infty)$. Completely monotonic functions on $(0,\infty)$ of degree $r\ge0$ with respect to $x$ can be characterized by~\cite[Remark~1.6]{Koumandos-Pedersen-JMAA-09} which may be reformulated as follows.

\begin{prop}
Let $f(x)$ be a completely monotonic function on $(0,\infty)$ and
\begin{equation}\label{egamma}
\Gamma(z)=\int^\infty_0t^{z-1} e^{-t}\td t, \quad \Re(z)>0
\end{equation}
stand for the classical Euler's gamma function. Then $\cmdeg{t}[f(t)]=r\ge0$ if and only if
\begin{equation}
f(x)=\int_0^\infty\biggl[\frac1{\Gamma(\alpha)} \int_0^s(s-\tau)^{\alpha-1}\td\mu_\alpha(\tau)\biggr]e^{-xs}\td s
\end{equation}
converges for all $0\le\alpha\le r$ and $0<x<\infty$, where $\mu_\alpha(\tau)$ is a family of non-negative
measures on $(0,\infty)$ if and only if $0\le\alpha\le r$.
\end{prop}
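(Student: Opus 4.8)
The plan is to read this proposition as a direct consequence of Bernstein's theorem (the Proposition containing~\eqref{berstein-1}) coupled with the behaviour of the Riemann--Liouville fractional integral under the Laplace transform. Two preliminary normalizations clean things up. First, since $f$ is completely monotonic and $f(\infty)=\lim_{x\to\infty}f(x)\ge0$, the function $f(x)-f(\infty)$ is again completely monotonic and vanishes at infinity; because the completely monotonic degree is defined through $x^{\alpha}[f(x)-f(\infty)]$, I would replace $f$ by $f-f(\infty)$ throughout and establish the representation for that function, i.e.\ work under the normalization $f(\infty)=0$. Second, at the endpoint $\alpha=0$ the inner bracket is to be understood as $\td\mu_{0}$, so the asserted formula degenerates to~\eqref{berstein-1}; since $f-f(\infty)$ is always completely monotonic, this already shows the degree is always $\ge0$, consistent with the hypothesis $r\ge0$.

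The computational heart of the argument is the identity
\[
\int_{0}^{\infty}\biggl[\frac1{\Gamma(\alpha)}\int_{0}^{s}(s-\tau)^{\alpha-1}\td\mu(\tau)\biggr]e^{-xs}\td s
=\frac1{x^{\alpha}}\int_{0}^{\infty}e^{-x\tau}\td\mu(\tau),\qquad x>0,\ \alpha>0,
\]
valid for every non-negative measure $\mu$ on $(0,\infty)$, with the left side finite exactly when the right side is. I would prove it by the substitution $u=s-\tau$, the formula $\int_{0}^{\infty}u^{\alpha-1}e^{-xu}\td u=\Gamma(\alpha)x^{-\alpha}$ coming from~\eqref{egamma}, and an interchange of the $s$-- and $\tau$--integrations, which is licit by Tonelli's theorem because every quantity in sight is non-negative (so both sides equal one and the same double integral in $[0,\infty]$). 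Equivalently, multiplying the displayed formula by $x^{\alpha}$ turns the left-hand integral into the plain Laplace transform $\int_{0}^{\infty}e^{-x\tau}\td\mu(\tau)$.

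Granting this identity, both implications are short, and they rest on the observation that, after the normalization, $\cmdeg{x}[f(x)]=r$ is equivalent to the statement ``$x^{\alpha}f(x)$ is completely monotonic on $(0,\infty)$ if and only if $0\le\alpha\le r$'': for $\alpha\le r$ write $x^{\alpha}f(x)=x^{-(r-\alpha)}\cdot x^{r}f(x)$ and use that $x^{-\beta}$ is completely monotonic for $\beta\ge0$ and that products of completely monotonic functions are completely monotonic, while for $\alpha>r$ non-monotonicity is the definition. For necessity: if the degree is $r$, then for each $\alpha\in[0,r]$ the completely monotonic function $x^{\alpha}f(x)$ equals $\int_{0}^{\infty}e^{-x\tau}\td\mu_{\alpha}(\tau)$ with $\mu_{\alpha}\ge0$ by~\eqref{berstein-1}; dividing by $x^{\alpha}$ and invoking the identity yields the displayed representation, whose convergence for all $x>0$ is precisely the convergence of the Bernstein integral of $x^{\alpha}f(x)$. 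For $\alpha>r$, if some non-negative $\mu_{\alpha}$ produced $f$ through the displayed formula, the identity would make $x^{\alpha}f(x)=\int_{0}^{\infty}e^{-x\tau}\td\mu_{\alpha}(\tau)$ completely monotonic, a contradiction; hence the family $\{\mu_{\alpha}\}$ can be taken non-negative exactly when $0\le\alpha\le r$. Sufficiency is the same chain read backwards: the hypothesized representation with $\mu_{\alpha}\ge0$ gives $x^{\alpha}f(x)=\int_{0}^{\infty}e^{-x\tau}\td\mu_{\alpha}(\tau)$, completely monotonic for $\alpha\le r$, and if $x^{\alpha}f(x)$ were completely monotonic for some $\alpha>r$ its Bernstein measure would be an admissible non-negative $\mu_{\alpha}$, contrary to the assumed range, so the degree is exactly $r$.

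I expect the difficulty to be entirely in the bookkeeping rather than in any single deep step: running the fractional-integral/Laplace identity at the level of possibly infinite, possibly atomic measures rather than smooth densities (the diagonal $s=\tau$ must be handled, though it is $ds$--negligible), dealing with the degenerate normalizing constant $\Gamma(\alpha)$ as $\alpha\downarrow0$, and keeping track of the constant $f(\infty)$ --- an atom of the Bernstein measure at the origin --- under multiplication and division by $x^{\alpha}$, which is exactly why reducing to $f(\infty)=0$ at the outset is worthwhile. The only non-formal ingredient is Bernstein's theorem itself (both directions, together with uniqueness of the representing measure), which is what upgrades ``$\mu_{\alpha}$ is a non-negative measure'' from a sufficient condition to the sharp dichotomy phrased in the statement.
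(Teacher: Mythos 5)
Your argument is sound, and it is worth noting that the paper itself offers no proof of this proposition: it is quoted (in reformulated form) from Remark~1.6 of the cited Koumandos--Pedersen paper, so there is no in-text argument to compare against. What you write is essentially the standard derivation behind that remark: the Tonelli-justified identity
\begin{equation*}
\int_{0}^{\infty}\biggl[\frac1{\Gamma(\alpha)}\int_{0}^{s}(s-\tau)^{\alpha-1}\td\mu(\tau)\biggr]e^{-xs}\td s
=\frac1{x^{\alpha}}\int_{0}^{\infty}e^{-x\tau}\td\mu(\tau)
\end{equation*}
(i.e.\ the Laplace transform of the Riemann--Liouville fractional integral), combined with Widder's theorem~\eqref{berstein-1} applied to $x^{\alpha}f(x)$ for each $\alpha$, together with the observation that $x^{\alpha}f=x^{-(r-\alpha)}\cdot x^{r}f$ handles all $\alpha\in[0,r]$ once the degree-$r$ case is known. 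Your two flagged normalizations are also the right ones: the statement as printed tacitly concerns $f-f(\infty)$ (a positive $f(\infty)$ cannot be the Laplace transform of a density, and it reappears as an atom of the representing measure at the origin, so strictly one should allow $\mu_\alpha$ on $[0,\infty)$ or assume $f(\infty)=0$), and the endpoint $\alpha=0$ must be read as plain Bernstein representation since $1/\Gamma(\alpha)\to0$. Since you identify and resolve both of these, I see no gap; your proof supplies exactly the justification the paper delegates to the reference.
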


\begin{dfn}[\cite{Atanassov, CBerg, absolute-mon-simp.tex, compmon2, minus-one, subadditive-qi-guo-jcam.tex, SCM-2012-0142.tex, Open-TJM-2003-Banach.tex}]
An infinitely differentiable and positive function $f$ is said to be logarithmically completely monotonic on an interval $I$ if
\begin{equation*}
(-1)^k[\ln f(x)]^{(k)}\ge0
\end{equation*}
hold on $I$ for all $k\in\mathbb{N}$.
\end{dfn}

\begin{dfn}[{\cite[Definition~2.1]{Schilling-Song-Vondracek-2nd}}]
If a function $f:(0,\infty)\to[0,\infty)$ can be written in the form
\begin{equation}\label{dfn-stieltjes}
f(x)=\frac{a}x+b+\int_0^\infty\frac1{s+x}{\td\mu(s)},
\end{equation}
where $a,b$ are non-negative constants and $\mu$ is a measure on $(0,\infty)$ such that $\int_0^\infty\frac1{1+s}\td\mu(s)<\infty$, then we say that $f$ is a Stieltjes function.
\end{dfn}

\begin{prop}[\cite{CBerg, absolute-mon-simp.tex, compmon2, minus-one}]
The inclusions
\begin{equation}\label{S-L-C-relation}
\mathcal{L}[I]\subset\mathcal{C}[I] \quad \text{and}\quad \mathcal{S}\setminus\{0\}\subset\mathcal{L}[(0,\infty)]
\end{equation}
are valid, where $\mathcal{S}$, $\mathcal{L}[I]$, and $\mathcal{C}[I]$ denote respectively the set of all Stieltjes functions, the set of all logarithmically completely monotonic functions on an interval $I$, and the set of all completely monotonic functions on $I$.
\end{prop}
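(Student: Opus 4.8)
The plan is to derive both inclusions directly from the real-variable definitions, resting on two elementary observations. First, $\mathcal C[I]$ is closed under products: if $p,q\in\mathcal C[I]$ then, by the Leibniz rule, $(-1)^n(pq)^{(n)}=\sum_{j=0}^n\binom nj\bigl[(-1)^jp^{(j)}\bigr]\bigl[(-1)^{n-j}q^{(n-j)}\bigr]\ge0$ on $I$ for all $n\ge0$, so $pq\in\mathcal C[I]$. Second, if a positive function $g$ on $I$ satisfies $-(\ln g)'\in\mathcal C[I]$, then for each $k\in\mathbb N$,
\begin{equation*}
(-1)^k[\ln g]^{(k)}=(-1)^{k-1}\bigl[-(\ln g)'\bigr]^{(k-1)}\ge0
\end{equation*}
on $I$, whence $g\in\mathcal L[I]$; thus membership in $\mathcal L[I]$ amounts to positivity together with complete monotonicity of $-(\ln g)'$.

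For $\mathcal L[I]\subset\mathcal C[I]$, let $f\in\mathcal L[I]$ and put $u=-(\ln f)'=-f'/f$. The definition of $\mathcal L[I]$ says precisely that $(-1)^ku^{(k)}=(-1)^{k+1}[\ln f]^{(k+1)}\ge0$ on $I$ for all $k\ge0$, so $u\in\mathcal C[I]$ and $u\ge0$, while $f>0$. From $f'=-uf$, induction on $n$ now gives $(-1)^nf^{(n)}\ge0$ on $I$: the base case $n=0$ is $f>0$, and the inductive step is
\begin{equation*}
(-1)^{n+1}f^{(n+1)}=(-1)^{n+1}(-uf)^{(n)}=\sum_{j=0}^n\binom nj\bigl[(-1)^{n-j}u^{(n-j)}\bigr]\bigl[(-1)^jf^{(j)}\bigr]\ge0,
\end{equation*}
where each bracketed factor is nonnegative by the complete monotonicity of $u$ and the induction hypothesis. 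Hence $f\in\mathcal C[I]$.

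For $\mathcal S\setminus\{0\}\subset\mathcal L[(0,\infty)]$, take $f\in\mathcal S\setminus\{0\}$ with representation \eqref{dfn-stieltjes}. Reading off \eqref{dfn-stieltjes}: since at least one of the three summands is positive for every $x>0$ (otherwise $f\equiv0$), we have $f(x)>0$ on $(0,\infty)$; and, being a nonnegative combination and superposition of the completely monotonic functions $\frac1x,1,\frac1{s+x}$, the function $f$ lies in $\mathcal C[(0,\infty)]$ (these operations preserve $\mathcal C[(0,\infty)]$, as one sees from \eqref{berstein-1}). The key step is to show that $1/f$ is a Bernstein function, i.e.\ that $(1/f)'\in\mathcal C[(0,\infty)]$. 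For this I would use the complex-analytic description of Stieltjes functions (see \cite{Schilling-Song-Vondracek-2nd}): \eqref{dfn-stieltjes} shows that $f$ continues holomorphically to $\mathbb C\setminus(-\infty,0]$ with $\Im f(z)\le0$ whenever $\Im z>0$; consequently $f$ has no zeros in $\mathbb C\setminus(-\infty,0]$ (immediate if $f$ is a positive constant, and otherwise by the open mapping theorem, since an open subset of $\{\Im w\le0\}$ must miss $0$), so $1/f$ is holomorphic there, is positive on $(0,\infty)$, and satisfies $\Im(1/f)(z)=-\Im f(z)/|f(z)|^2\ge0$ for $\Im z>0$; these properties characterize complete Bernstein functions, each of which is in particular a Bernstein function. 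Therefore $(1/f)'=-f'/f^2\in\mathcal C[(0,\infty)]$, and by the product property
\begin{equation*}
-(\ln f)'=-\frac{f'}{f}=\Bigl(\frac1f\Bigr)'f\in\mathcal C[(0,\infty)];
\end{equation*}
combined with $f>0$, the second observation above yields $f\in\mathcal L[(0,\infty)]$. This establishes both inclusions in \eqref{S-L-C-relation}.

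The Leibniz-rule inductions and the closure properties of $\mathcal C[(0,\infty)]$ are routine bookkeeping. The one genuinely nontrivial ingredient — and the step I expect to need the most care — is the implication ``$f$ Stieltjes $\Rightarrow$ $1/f$ (complete) Bernstein'', which does not seem to come cleanly out of the real-variable definition and is best handled through the Nevanlinna/Pick characterization of Stieltjes functions; this is exactly the fact recorded in \cite{CBerg, Schilling-Song-Vondracek-2nd}, and once it is in hand the argument above makes \eqref{S-L-C-relation} self-contained.
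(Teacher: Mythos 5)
Your proposal is correct, but note that the paper itself gives no proof of this proposition: it is quoted as known, with the references \cite{CBerg, absolute-mon-simp.tex, compmon2, minus-one}, so the comparison is with those sources rather than with anything in the text. For the inclusion $\mathcal{L}[I]\subset\mathcal{C}[I]$ your Leibniz-rule induction on $f'=-uf$ with $u=-(\ln f)'$ completely monotonic is exactly the standard argument of the cited papers, so there is nothing new there. For $\mathcal{S}\setminus\{0\}\subset\mathcal{L}[(0,\infty)]$ you take a genuinely different route from Berg's: Berg argues through stability of the Stieltjes cone under fractional powers (infinite divisibility of $f$), whereas you pass through ``$f$ Stieltjes $\Rightarrow\frac1f$ complete Bernstein $\Rightarrow\frac1f$ Bernstein'' and then write $-(\ln f)'=\bigl(\frac1f\bigr)'f$ as a product of completely monotonic functions. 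This chain is sound and has the merit of using only machinery the paper already invokes from \cite{Schilling-Song-Vondracek-2nd}; indeed you could shorten it further by citing Theorem~7.3 there (Stieltjes $\Leftrightarrow$ reciprocal complete Bernstein), which the paper quotes verbatim, instead of rederiving it via the Nevanlinna--Pick characterization --- either way the nontrivial content is outsourced to that reference, which is acceptable here since the statement itself is a citation.

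Two small points deserve tightening. First, your no-zero argument via the open mapping theorem should be run on the open upper half-plane, where \eqref{dfn-stieltjes} gives $\Im f(z)\le0$, so that the (open) image lies in $\{\Im w<0\}$ and misses $0$; zeros in the lower half-plane are then excluded by the reflection $f(\overline z)=\overline{f(z)}$, and zeros on $(0,\infty)$ by the positivity you already established --- as written, ``an open subset of $\{\Im w\le0\}$'' cannot refer to the image of the whole cut plane, which is not contained in that half-plane. Second, when you invoke the characterization of complete Bernstein functions for $\frac1f$ you should also record $\Im z<0\Rightarrow\Im\frac1{f(z)}\le0$, which follows from the same reflection symmetry. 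With these touches the argument is complete and correct.
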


\begin{dfn}[\cite{Schilling-Song-Vondracek-2nd}]
An infinitely differentiable function $f:I\to[0,\infty)$ is called a Bernstein function on an interval $I$ if $f'(t)$ is completely monotonic on $I$.
\end{dfn}

We denote the group of all Bernstein functions on an interval $I$ by $\mathcal{B}[I]$.

\begin{prop}[{\cite[Theorem~3.2]{Schilling-Song-Vondracek-2nd}}]
A function $f:(0,\infty)\to[0,\infty)$ is a Bernstein function if and only if it admits the representation
\begin{equation}\label{Levy-Khintchine-repreentation}
f(x)=a+bx+\int_0^\infty\bigl(1-e^{-xt}\bigr)\td\mu(t),
\end{equation}
where $a,b\ge0$ and $\mu$ is a measure on $(0,\infty)$ satisfying $\int_0^\infty\min\{1,t\}\td\mu(t)<\infty$.
In particular, the triplet $(a,b,\mu)$ determines $f$ uniquely and vice versa.
\end{prop}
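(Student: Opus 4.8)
The plan is to prove the two implications of the stated equivalence separately and then read off uniqueness.

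\emph{Sufficiency} ($\Leftarrow$). Assuming $f$ has the form~\eqref{Levy-Khintchine-repreentation}, I would first note that the integral is finite for each $x>0$ since $0\le1-e^{-xt}\le\min\{1,xt\}$ and $\int_0^\infty\min\{1,t\}\td\mu(t)<\infty$, and that $f\ge0$ because $a,b\ge0$. To see that $f$ is Bernstein I would differentiate under the integral sign: on $x\ge\delta>0$ the $x$-derivative $te^{-xt}$ of the integrand is dominated by $te^{-\delta t}\le C_\delta\min\{1,t\}$, which is $\mu$-integrable, so $f'(x)=b+\int_0^\infty te^{-xt}\td\mu(t)$. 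This is a nonnegative constant plus the Laplace transform of the nonnegative measure $t\,\td\mu(t)$ on $(0,\infty)$, hence completely monotonic by the Widder representation~\eqref{berstein-1}; thus $f'\in\mathcal{C}[(0,\infty)]$ and $f$ is a Bernstein function.

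\emph{Necessity} ($\Rightarrow$). Given a Bernstein function $f$, its derivative $f'$ is completely monotonic on $(0,\infty)$, so by~\eqref{berstein-1} I would write $f'(x)=\int_{[0,\infty)}e^{-xt}\td\sigma(t)$ for a nonnegative measure $\sigma$, the integral converging for all $x>0$; pulling out the atom at the origin, $b:=\sigma(\{0\})\ge0$, gives $f'(x)=b+\int_{(0,\infty)}e^{-xt}\td\sigma(t)$. Integrating from $1$ to $x$ and swapping the order of integration by Tonelli's theorem would produce
\begin{equation*}
f(x)-f(1)=b(x-1)+\int_{(0,\infty)}\frac{e^{-t}-e^{-xt}}{t}\td\sigma(t),
\end{equation*}
which suggests setting $\td\mu(t)=\frac1t\td\sigma(t)$ on $(0,\infty)$. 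I would then verify $\int_0^\infty\min\{1,t\}\td\mu(t)=\sigma\bigl((0,1]\bigr)+\int_1^\infty\frac1t\td\sigma(t)<\infty$: the bound $e^{-2t}\ge e^{-2}$ on $(0,1]$ and finiteness of $f'(2)$ give $\sigma\bigl((0,1]\bigr)\le e^2f'(2)<\infty$, while the elementary inequality $te^{-t}\le e^{-1}$ on $[1,\infty)$ gives $\frac1t\le e\,e^{-t}$, whence $\int_1^\infty\frac1t\td\sigma(t)\le e\,f'(1)<\infty$. With $\mu$ so controlled, $c:=\int_0^\infty(1-e^{-t})\td\mu(t)<\infty$ because $1-e^{-t}\le\min\{1,t\}$, and the displayed identity rearranges to
\begin{equation*}
f(x)=\bigl[f(1)-b-c\bigr]+bx+\int_0^\infty\bigl(1-e^{-xt}\bigr)\td\mu(t).
\end{equation*}
Finally I would put $a:=f(1)-b-c$ and let $x\to0^+$: by dominated convergence with dominating function $\min\{1,t\}$ the integral tends to $0$, so $a=\lim_{x\to0^+}f(x)\ge0$, which delivers the required representation.

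For uniqueness, I would observe that the triplet clearly determines $f$ through~\eqref{Levy-Khintchine-repreentation}, and conversely that $b=\lim_{x\to\infty}f(x)/x$ and $a=\lim_{x\to0^+}f(x)$ are determined by $f$, after which $f'(x)-b=\int_0^\infty te^{-xt}\td\mu(t)$ is a Laplace transform and the uniqueness of the representing measure in~\eqref{berstein-1} fixes $\mu$. I expect the main obstacle to be the middle step of the necessity part — deducing the two-sided integrability $\int_0^\infty\min\{1,t\}\td\mu(t)<\infty$ from nothing more than the pointwise finiteness of $f'(1)$ and $f'(2)$ — together with the bookkeeping needed to land exactly on the normalization $a,b\ge0$; the remaining manipulations are routine applications of Tonelli's and the dominated convergence theorems.
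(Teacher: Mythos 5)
The paper offers no proof of this proposition at all --- it is quoted as background from the cited monograph of Schilling, Song, and Vondra\v{c}ek (their Theorem~3.2) --- so your proposal can only be judged on its own terms. Your sufficiency direction and your uniqueness argument are sound, but the necessity direction breaks exactly at the step you yourself flagged as the main obstacle. From $te^{-t}\le e^{-1}$ on $[1,\infty)$ one gets $e\,e^{-t}\le\frac1t$, not $\frac1t\le e\,e^{-t}$: the inequality is reversed. Moreover the bound you want from it, $\int_1^\infty\frac1t\td\sigma(t)\le e\,f'(1)$, is false in general: for $f(x)=1-e^{-Tx}$ one has $\sigma=T\delta_T$, so the left-hand side equals $1$ while the right-hand side equals $eTe^{-T}$, which is tiny for large $T$. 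Worse still, the tail integrability $\mu\bigl([1,\infty)\bigr)=\int_1^\infty\frac1t\td\sigma(t)<\infty$ cannot be deduced from pointwise finiteness of $f'$ at all: the function $g(x)=\int_1^\infty e^{-xt}e^{\sqrt t}\,\td t$ is completely monotonic and finite for every $x>0$, yet $\int_1^\infty\frac1t e^{\sqrt t}\,\td t=\infty$. What excludes such representing measures is the nonnegativity of $f$ itself, which your necessity argument never invokes until the very last step $a\ge0$ --- by which point the needed finiteness of $c=\int_0^\infty(1-e^{-t})\td\mu(t)$ has already been assumed.

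The repair is to use $f\ge0$ earlier. Since $f\ge0$ and $f'\ge0$, monotone convergence in $f(1)-f(x)=\int_x^1 f'(s)\,\td s$ as $x\to0^+$ gives $\int_0^1 f'(s)\,\td s\le f(1)<\infty$; by Tonelli this integral equals $b+\int_{(0,\infty)}\frac{1-e^{-t}}{t}\td\sigma(t)$, and the elementary estimate $\frac{1-e^{-t}}{t}\ge\bigl(1-e^{-1}\bigr)\min\bigl\{1,\frac1t\bigr\}$ then yields $\int_0^\infty\min\{1,t\}\td\mu(t)=\int_0^\infty\min\bigl\{1,\frac1t\bigr\}\td\sigma(t)<\infty$ in one stroke (your separate bound $\sigma\bigl((0,1]\bigr)\le e^2f'(2)$ is correct but becomes redundant). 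With that step substituted, the remainder of your argument --- integrating $f'$ from $1$ to $x$, dominated convergence as $x\to0^+$ to identify $a=f(0^+)\ge0$, and uniqueness via $b=\lim_{x\to\infty}f(x)/x$ together with uniqueness of the Laplace transform in~\eqref{berstein-1} --- goes through and recovers the standard proof of the L\'evy-Khintchine representation.
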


The formula~\eqref{Levy-Khintchine-repreentation} is called L\'evy-Khintchine representation of $f$. The representing measure $\mu$ and the characteristic triplet $(a,b,\mu)$ from~\eqref{Levy-Khintchine-repreentation} are often respectively called L\'evy measure and L\'evy triplet of the Bernstein function $f$.

\begin{dfn}[{\cite[Definition~6.1]{Schilling-Song-Vondracek-2nd}}]
If L\'evy measure $\mu$ from~\eqref{Levy-Khintchine-repreentation} has a completely monotonic density $m(t)$ with respect to Lebesgue measure, that is, the integral representation
\begin{equation}\label{Levy-Khintchine-m(t)}
f(x)=a+bx+\int_0^\infty\bigl(1-e^{-xt}\bigr)m(t)\td t
\end{equation}
holds for $a,b\ge0$, where $m(t)$ is a completely monotonic function on $(0,\infty)$ and satisfies $\int_0^\infty\min\{1,t\}m(t)\td t<\infty$, then $f$ is said to be a complete Bernstein function on $(0,\infty)$.
\end{dfn}

We denote the collection of all complete Bernstein functions on $(0,\infty)$ by $\mathcal{CB}$.

\begin{dfn}[{\cite[Definition~8.1]{Schilling-Song-Vondracek-2nd}}]
If the function $tm(t)$ is completely monotonic on $(0,\infty)$, then $f$ is said to be a Thorin-Bernstein function on $(0,\infty)$.
\end{dfn}

We use $\mathcal{TB}$ to denote the class of all Thorin-Bernstein functions on $(0,\infty)$.

\begin{dfn}[{\cite{Geom-Mean-Deg-One-Guo.tex, simp-exp-degree-ext-gjma.tex} and~\cite[Definition~1.4]{Qi-Zhang-Li-MJM-14}}]
If $\cmdeg{t}[m(t)]=r$ for some $r\ge0$, then $f$ is said to be a complete Bernstein function of degree $r$, or say, the scalar $r$ is said to be the degree of the complete Bernstein function $f$ on $(0,\infty)$.
\end{dfn}

Similar to $\cmdeg{t}[f(t)]$, we use the notation $\cbdeg{t}[f(t)]$ to represent the degree of the complete Bernstein function $f$ on $(0,\infty)$.

\begin{dfn}[{\cite[Definition~2]{Besenyei-MIA-13}}]
Let $\mathbb{M}_n^+$ denote the space of $n\times n$ complex Hermitian positive semi-definite matrices with the usual ordering that $A\le B$ means that $B-A$ is a positive matrix. For a real function $f$ on an interval $I$, if $D$ is a diagonal matrix $\diag(\lambda_1,\lambda_2,\dotsc,\lambda_n)$, then define $f(D)=\diag(f(\lambda_1),f(\lambda_2),\dotsc,f(\lambda_n))$. If $A$ is an Hermitian matrix with eigenvalues belonging to $I$, then define $f(A)=Uf(D)U^H$, where $A=UDU^H$ and the diagonal matrix $D$ is constituted by the eigenvalues of $A$, with $U$ being a unitary matrix and $U^H$ being the conjugate transpose of $U$.
A function $f:I\to(0,\infty)$ is said to be matrix monotone of order $n$ if $A\le B$ implies $f(A)\le f(B)$, where $A,B\in\mathbb{M}_n^+$ and the eigenvalues of $A$ and $B$ are contained in the interval $I$.
If for every $n\ge1$ a function $f$ on an interval $I$ is always matrix monotone of order $n$, then $f$ is said to be operator monotone on $I$.
\end{dfn}

We use $\mathcal{OM}[I]$ to denote the set of all operator monotone functions on an interval $I$.
\par
Because~\cite[Theorem~7.3]{Schilling-Song-Vondracek-2nd}\label{Theorem-7.3-Schilling-Song-Vondracek-2nd} reads that a (non-trivial) function $f$ is a complete Bernstein function if and only if $\frac1f$ is a (non-trivial) Stieltjes function, we may define a new notion ``degree of Stieltjes function'' as follows.

\begin{dfn}\label{deg-stieltjes-dfn}
Let $f(t)$ be a Stieltjes function. If $\cbdeg{t}\bigl[\frac1{f(t)}\bigr]=r$ for some $r\ge0$, then $f(t)$ is said to be a Stieltjes function of degree $r$, or say, the scalar $r$ is said to be the degree of the Steltjes function $f$ on $(0,\infty)$.
\end{dfn}

Similar to the above mentioned $\cmdeg{t}[f(t)]$ and $\cbdeg{t}[f(t)]$, we use $\stdeg{t}[f(t)]$ to represent the degree of the Stieltjes function $f$.

\begin{rem}
Since a complete Bernstein function is of degree $\infty$ if and only if it is a linear function $a+bx$ with $a,b\ge0$, see~\cite{Qi-Zhang-Li-MJM-14}, then $\stdeg{x}[f(x)]=\infty$ if and only if $f(x)=\frac1{a+bx}$ for $(a,b)\in[0,\infty)\times[0,\infty)\setminus\{(0,0)\}$.
\end{rem}

It is stated in~\cite[Theorem~12.17]{Schilling-Song-Vondracek-2nd}\label{Theorem.12.17-SV2} that the families of complete Bernstein factions and positive operator monotone functions on $(0,\infty)$ coincide. Therefore, we may introduce a new notion ``degree of a positive operator monotone function'' as follows.

\begin{dfn}\label{deg-op-dfn}
Let $f(t)$ be a positive operator monotone function on $(0,\infty)$. If $\cbdeg{t}[f(t)]=r$ for some $r\ge0$, then $f(t)$ is said to be a positive operator monotone function of degree $r$, or say, the scalar $r$ is said to be the degree of the positive operator monotone function $f$ on $(0,\infty)$.
\end{dfn}

Similar to the above mentioned $\cmdeg{t}[f(t)]$, $\cbdeg{t}[f(t)]$, and $\stdeg{t}[f(t)]$, we use $\opdeg{t}[f(t)]$ to represent the degree of a positive operator monotone function $f$.

\begin{rem}
As mentioned above, a complete Bernstein function is of degree $\infty$ if and only if it is a linear function $a+bx$ with $a,b\ge0$, see~\cite{Qi-Zhang-Li-MJM-14}, then $\opdeg{x}[f(x)]=\infty$ if and only if $f(x)=a+bx$ for $a,b\ge0$.
\end{rem}

\section{Motivation and main results}

Now we simply summarize up the motivation of this paper.
\par
In~\cite{Ivady-JMI-09-03-02}, the double inequality
\begin{equation}\label{Ivady-JMI-09-03-02-ineq}
\frac{x^2+1}{x+1}<\Gamma(x+1)<\frac{x^2+2}{x+2}
\end{equation}
for $x\in(0,1)$ was obtained.
\par
In~\cite[Theorem~1]{refine-Ivady-gamma-PMD.tex}, the double inequality~\eqref{Ivady-JMI-09-03-02-ineq} was generalized as the following monotonicity.

\begin{thm}\label{bounds-for-gamma-thm}
The function
\begin{equation}\label{gamma-ln-ratio}
Q(x)=\frac{\ln\Gamma(x+1)}{\ln(x^2+1)-\ln(x+1)}
\end{equation}
is strictly increasing on $(0,1)$, with the limits
\begin{equation}\label{limit-ivady-1}
\lim_{x\to0^+}Q(x)=\gamma \quad \text{and}\quad \lim_{x\to1^-}Q(x)=2(1-\gamma),
\end{equation}
where $\gamma=0.57\dotsm$ denotes Euler-Mascheroni's constant.
\end{thm}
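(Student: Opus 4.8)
The plan is to handle the two limits first and then the monotonicity, which carries all the work. Write $f(x)=\ln\Gamma(x+1)$ and $g(x)=\ln(x^2+1)-\ln(x+1)$ for the numerator and denominator of $Q$, and note $g'(x)=\frac{2x}{x^2+1}-\frac1{x+1}$. Since $\Gamma(1)=\Gamma(2)=1$, both $f$ and $g$ vanish at $x=0$ and at $x=1$, with simple zeros there because $f'(0)=\psi(1)=-\gamma$, $f'(1)=\psi(2)=1-\gamma$, $g'(0)=-1$, $g'(1)=\tfrac12$ are all nonzero; hence $Q=f/g$ extends analytically across both endpoints and l'Hospital's rule gives $Q(0^+)=\frac{f'(0)}{g'(0)}=\gamma$ and $Q(1^-)=\frac{f'(1)}{g'(1)}=2(1-\gamma)$, which is~\eqref{limit-ivady-1}. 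For the monotonicity, observe that $g(x)<0$ on $(0,1)$ (because $\frac{x^2+1}{x+1}<1\iff x<1$), so for any constant $\lambda$ and $x\in(0,1)$,
\[
h_\lambda(x):=f(x)-\lambda g(x)>0\iff Q(x)<\lambda,\qquad h_\lambda(x)<0\iff Q(x)>\lambda .
\]
Thus the theorem reduces to showing (a) $\gamma<Q(x)<2(1-\gamma)$ on $(0,1)$, and (b) for each $\lambda\in(\gamma,2(1-\gamma))$, $h_\lambda$ has exactly one zero $x_\lambda$ in $(0,1)$, with $h_\lambda>0$ on $(0,x_\lambda)$ and $h_\lambda<0$ on $(x_\lambda,1)$: granting these, if $0<x_1<x_2<1$ then $\lambda_0:=Q(x_1)\in(\gamma,2(1-\gamma))$ by (a), and (b) forces $x_1=x_{\lambda_0}$, whence $Q(x_2)>\lambda_0=Q(x_1)$. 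Both (a) and (b) will be read off from
\[
h_\lambda''(x)=\psi'(x+1)-\lambda g''(x)=g''(x)\bigl[\rho(x)-\lambda\bigr],\qquad \rho(x):=\frac{\psi'(x+1)}{g''(x)},
\]
where $g''(x)=\frac{2(1-x^2)}{(x^2+1)^2}+\frac1{(x+1)^2}>0$ on $(0,1)$, together with $h_\lambda(0)=h_\lambda(1)=0$, $h_\lambda'(0)=\lambda-\gamma$, $h_\lambda'(1)=(1-\gamma)-\tfrac\lambda2$.

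The heart of the argument is the lemma: \emph{$\rho$ is strictly decreasing then strictly increasing on $(0,1)$, with $\rho(0^+)=\frac{\pi^2}{18}<\gamma$ and $\rho(1^-)=\frac{2(\pi^2-6)}3>2(1-\gamma)$}; the two numerical facts amount to $\pi^2<18\gamma$ and $\pi^2>9-3\gamma$, which are elementary. To prove the shape, note that the numerator of $\rho'$ equals $g''(x)\psi'(x+1)\bigl[\frac{\psi''(x+1)}{\psi'(x+1)}-\frac{g'''(x)}{g''(x)}\bigr]$ with $g''\psi'>0$, so $\operatorname{sgn}\rho'=\operatorname{sgn}\bigl[\frac{\psi''(x+1)}{\psi'(x+1)}-\frac{g'''(x)}{g''(x)}\bigr]$. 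Here $x\mapsto\frac{\psi''(x+1)}{\psi'(x+1)}$ is strictly increasing, since $\bigl(\frac{\psi''}{\psi'}\bigr)'=\frac{\psi'\psi'''-(\psi'')^2}{(\psi')^2}\ge0$ by the Cauchy--Schwarz inequality applied to $\psi'(y)=\int_0^\infty\frac{t\,e^{-yt}}{1-e^{-t}}\td t$, $-\psi''(y)=\int_0^\infty\frac{t^2e^{-yt}}{1-e^{-t}}\td t$, $\psi'''(y)=\int_0^\infty\frac{t^3e^{-yt}}{1-e^{-t}}\td t$; and $x\mapsto\frac{g'''(x)}{g''(x)}$ is strictly decreasing, since $\bigl(\frac{g'''}{g''}\bigr)'=\frac{g^{(4)}g''-(g''')^2}{(g'')^2}$ and $g^{(4)}(x)g''(x)-(g'''(x))^2<0$ on $(0,1)$ — an explicit rational-function inequality obtained by clearing denominators in the formulas for $g'',g''',g^{(4)}$. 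So the bracket is strictly increasing; it is negative at $0^+$ and positive at $1^-$, hence vanishes exactly once, and the lemma follows.

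Granting the lemma, fix $\lambda\in[\gamma,2(1-\gamma)]$. From $\rho(0^+)<\gamma\le\lambda\le2(1-\gamma)<\rho(1^-)$ and the unimodal shape (so $\rho$ dips below its initial value and then rises once through $\lambda$) we get $\rho<\lambda$ on an initial interval $(0,c_\lambda)$ and $\rho>\lambda$ on $(c_\lambda,1)$; hence $h_\lambda'$ is strictly decreasing on $(0,c_\lambda)$ and strictly increasing on $(c_\lambda,1)$. A short sign-chase on $h_\lambda'$, using $h_\lambda'(0)=\lambda-\gamma\ge0$, $h_\lambda'(1)=(1-\gamma)-\tfrac\lambda2\ge0$, and the fact that $h_\lambda'\ge0$ throughout would force $h_\lambda\equiv0$ (contradicting $h_\lambda'(0)>0$ when $\lambda>\gamma$), then yields: for $\lambda=\gamma$, $h_\gamma<0$ on $(0,1)$, i.e.\ $Q>\gamma$; for $\lambda=2(1-\gamma)$, $h_{2(1-\gamma)}>0$ on $(0,1)$, i.e.\ $Q<2(1-\gamma)$; and for $\lambda\in(\gamma,2(1-\gamma))$, $h_\lambda$ is increasing–decreasing–increasing on $(0,1)$ with $h_\lambda>0$ near $0$ and $h_\lambda<0$ near $1$, hence has a single zero with the stated sign pattern. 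These are precisely (a) and (b), so the proof is complete.

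The main obstacle I anticipate is the lemma, and within it the inequality $g^{(4)}(x)g''(x)<(g'''(x))^2$ on all of $(0,1)$: it is a concrete polynomial/rational estimate but a bulky one, and it must be checked uniformly, with particular care near $x=0$, where the two competing ratios $\frac{\psi''(x+1)}{\psi'(x+1)}$ and $\frac{g'''(x)}{g''(x)}$ are numerically close. A secondary, purely organizational, difficulty is handling the three values $\lambda=\gamma$, $\lambda\in(\gamma,2(1-\gamma))$, $\lambda=2(1-\gamma)$ together, since it is exactly this package that simultaneously confines $Q$ to $(\gamma,2(1-\gamma))$ and produces the strict monotonicity.
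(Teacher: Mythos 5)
First, a point of reference: the paper you were asked to match does not prove Theorem~\ref{bounds-for-gamma-thm} at all --- it imports it verbatim from \cite[Theorem~1]{refine-Ivady-gamma-PMD.tex}, so there is no in-paper argument to compare yours with; your attempt has to stand on its own. On its own terms, the skeleton is sound. The limit computations via l'Hospital are correct ($f'(0)=-\gamma$, $g'(0)=-1$, $f'(1)=1-\gamma$, $g'(1)=\tfrac12$), the sign observation $g<0$ on $(0,1)$ and the resulting equivalence $h_\lambda>0\iff Q<\lambda$ are right, and the reduction of strict monotonicity to the level-set statement (a)+(b) is valid. The identity $h_\lambda''=g''[\rho-\lambda]$ with $g''>0$ on $(0,1)$, the endpoint data $h_\lambda(0)=h_\lambda(1)=0$, $h_\lambda'(0)=\lambda-\gamma$, $h_\lambda'(1)=(1-\gamma)-\tfrac\lambda2$, and the values $\rho(0^+)=\pi^2/18<\gamma$, $\rho(1^-)=2(\pi^2-6)/3>2(1-\gamma)$ all check out, and the concluding sign-chase (including the degenerate cases $\lambda=\gamma$ and $\lambda=2(1-\gamma)$) does deliver (a) and (b) once the unimodality of $\rho$ is granted. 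The Cauchy--Schwarz argument that $\psi''(x+1)/\psi'(x+1)$ is strictly increasing is also correct.

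The genuine gap is exactly where you flagged it: the claim that $g'''/g''$ is strictly decreasing on $(0,1)$, i.e.\ $g^{(4)}(x)g''(x)<(g'''(x))^2$ there, is asserted but never established. This is the load-bearing half of your key lemma; without it you do not know that the bracket $\frac{\psi''(x+1)}{\psi'(x+1)}-\frac{g'''(x)}{g''(x)}$ changes sign only once, and the whole sign structure of $h_\lambda''$ collapses. Numerical sampling (e.g.\ the quantity equals $-22$ at $x=0$, about $-6.9$ at $x=\tfrac12$, and $-\tfrac{23}{32}$ at $x=1$) strongly suggests the inequality is true and not delicate, and after inserting $g''=\frac{2(1-x^2)}{(x^2+1)^2}+\frac1{(x+1)^2}$, $g'''=\frac{4x(x^2-3)}{(x^2+1)^3}-\frac2{(x+1)^3}$, $g^{(4)}=\frac{-12(x^4-6x^2+1)}{(x^2+1)^4}+\frac6{(x+1)^4}$ and clearing denominators it becomes a one-variable polynomial positivity statement on $(0,1)$, so the step is fillable --- but as written it is an unverified assertion, not a proof. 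A smaller omission of the same kind: the endpoint signs of the bracket, which amount to $18\zeta(3)>\pi^2$ at $x=0^+$ and $\frac{5\pi^2}6>3+2\zeta(3)$ at $x=1^-$, should be stated explicitly, though these are trivial. Fix the polynomial inequality and the argument is complete; it is, in any case, an independent route rather than a reconstruction of the cited source's proof.
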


In~\cite[Section~5]{refine-Ivady-gamma-PMD.tex}, the following problem and conjectures were posed.

\begin{open}[{\cite[Section~5.1]{refine-Ivady-gamma-PMD.tex}}]\label{OP-debrecen}
What is the largest number $\tau>1$ \textup{(}or the smallest number $0\le\tau<6$, respectively\textup{)} for the function
\begin{equation}\label{gamma-ln-ratio-lambda}
f_\tau(x)=
\begin{cases}
    \dfrac{\ln\Gamma(x+1)}{\ln(x^2+\tau)-\ln(x+\tau)},& x\ne1\\
    (1+\tau)(1-\gamma), & x=1
  \end{cases}
\end{equation}
on $(0,\infty)$, where $\gamma=0.577\dotsc$ denotes Euler-Mascheroni's constant, to be strictly increasing \textup{(}or decreasing, respectively\textup{)} on $(0,1)$?
\end{open}

\begin{conj}[{\cite[Section~5.2]{refine-Ivady-gamma-PMD.tex}}]\label{gamma-ln-ratio-conj}
The function $f_1(x)$ is strictly increasing not only on $(0,1)$ but also on $(0,\infty)$.
\end{conj}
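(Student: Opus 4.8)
The plan is to reduce Conjecture~\ref{gamma-ln-ratio-conj} first to a monotonicity statement for an explicit ratio of a digamma value to a rational function, and then to a one-variable inequality for the digamma and trigamma functions. Put $g(x)=\ln\Gamma(x+1)$ and $h(x)=\ln(x^2+1)-\ln(x+1)$, so $f_1=g/h$ on $(0,\infty)\setminus\{1\}$. Since $g(1)=\ln\Gamma(2)=0$ and $h(1)=\ln1=0$, L'Hospital's rule shows $f_1$ is continuous at $x=1$ with the value $2(1-\gamma)$ prescribed in \eqref{gamma-ln-ratio-lambda} for $\tau=1$, matching the left-hand limit in Theorem~\ref{bounds-for-gamma-thm}. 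By Theorem~\ref{bounds-for-gamma-thm} the function $f_1$ is strictly increasing on $(0,1)$; since a function that is strictly increasing on $(0,1)$ and on $(1,\infty)$ and continuous at the joining point $1$ is strictly increasing on $(0,\infty)$, it remains only to prove that $f_1$ is strictly increasing on $(1,\infty)$.

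On $(1,\infty)$ I would invoke the monotone form of L'Hospital's rule. We have $g(1)=h(1)=0$, $g'(x)=\psi(x+1)$, and
\[
h'(x)=\frac{2x}{x^2+1}-\frac1{x+1}=\frac{x^2+2x-1}{(x^2+1)(x+1)}>0,\qquad x>1,
\]
so $f_1=g/h$ is strictly increasing on $(1,\infty)$ as soon as
\[
\phi(x):=\frac{g'(x)}{h'(x)}=\frac{\psi(x+1)\,(x^2+1)(x+1)}{x^2+2x-1}
\]
is strictly increasing there. Writing $\phi(x)=\psi(x+1)R(x)$ with $R(x)=\dfrac{(x^2+1)(x+1)}{x^2+2x-1}>0$, a short computation gives
\[
(\ln R)'(x)=\frac{p(x)}{(x+1)(x^2+1)(x^2+2x-1)},\qquad p(x)=x^4+4x^3-2x^2-4x-3,
\]
so that $\phi'(x)=R(x)\bigl[\psi'(x+1)+\psi(x+1)(\ln R)'(x)\bigr]$. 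Since $p'(x)=4(x^3+3x^2-x-1)>0$ on $[1,\infty)$, while $p(1)=-4<0$ and $p(1.3)>0$, the polynomial $p$ is increasing on $[1,\infty)$ and has a single zero $x_1\in(1,1.3)$, with $(\ln R)'<0$ on $(1,x_1)$ and $(\ln R)'\ge0$ on $[x_1,\infty)$.

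For $x\ge x_1$ one has $\psi'(x+1)>0$, $\psi(x+1)\ge\psi(2)=1-\gamma>0$, and $(\ln R)'(x)\ge0$, whence $\phi'(x)\ge R(x)\psi'(x+1)>0$. For $x\in(1,x_1)$ one needs $\psi'(x+1)>\psi(x+1)\bigl(-(\ln R)'(x)\bigr)$. Here $-(\ln R)'(x)=\dfrac{-p(x)}{(x+1)(x^2+1)(x^2+2x-1)}$; on $(1,x_1)$ the numerator $-p$ is positive and decreasing (because $p'>0$ there), hence $-p(x)<-p(1)=4$, while the denominator is a product of positive increasing factors with value $8$ at $x=1$, so $-(\ln R)'(x)<\tfrac12$. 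On the other hand, $y\mapsto\psi'(y)/\psi(y)$ is decreasing on $[2,\infty)$ because $\psi''(y)<0$ and $\psi(y)>0$ there, so on $(1,x_1)\subset(1,1.3)$ we get $\dfrac{\psi'(x+1)}{\psi(x+1)}>\dfrac{\psi'(5/2)}{\psi(5/2)}>\tfrac12$, using the closed forms $\psi(5/2)=\tfrac83-\gamma-2\ln2$ and $\psi'(5/2)=\tfrac{\pi^2}{2}-\tfrac{40}{9}$. Combining the two estimates yields $\psi'(x+1)>\tfrac12\psi(x+1)>\psi(x+1)\bigl(-(\ln R)'(x)\bigr)$, so $\phi'(x)>0$ on $(1,x_1)$ as well. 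Hence $\phi$ is strictly increasing on $(1,\infty)$, so $f_1$ is strictly increasing on $(1,\infty)$, and with the reduction of the first paragraph $f_1$ is strictly increasing on $(0,\infty)$, which is Conjecture~\ref{gamma-ln-ratio-conj}.

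The delicate point, and the main obstacle, is the behaviour just to the right of $x=1$: there $(\ln R)'$ is negative, so the \emph{a priori} positive term $\psi'(x+1)R(x)$ in $\phi'$ does not by itself dominate, and one must quantitatively outweigh the negative contribution $\psi(x+1)(\ln R)'(x)$. This is exactly what forces the two explicit estimates $-(\ln R)'<\tfrac12$ and $\psi'/\psi>\tfrac12$ on a short interval, together with the localization $x_1<1.3$ of the zero of $p$. An alternative that would avoid splitting at $x_1$ is to substitute a Binet-type integral representation of $\ln\Gamma(x+1)$, or one of the integral representations for logarithmic functions established in this paper, directly into the inequality $\psi'(x+1)+\psi(x+1)(\ln R)'(x)>0$; but the elementary estimate above already suffices.
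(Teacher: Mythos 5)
Your argument appears to be correct, and it is genuinely different from what the paper does: the paper never proves Conjecture~\ref{gamma-ln-ratio-conj} internally at all, but simply records that it ``was confirmed by'' Theorem~1 of~\cite{mon-funct-gamma-unit-ball.tex} and then moves on to its actual goal, Conjecture~\ref{conj-third}, which it settles by Cauchy-integral representations of $H(z)$. Your proof, by contrast, is self-contained modulo two standard ingredients: Theorem~\ref{bounds-for-gamma-thm} for the interval $(0,1)$ (legitimately quoted, since the paper states it) and the monotone form of L'Hospital's rule, which is not in the paper and should be cited explicitly (e.g.\ Anderson--Vamanamurthy--Vuorinen), including its strict version so that strict increase of $g'/h'$ gives strict increase of $g/h$. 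I checked the computational core and it holds up: $h'(x)=\frac{x^2+2x-1}{(x^2+1)(x+1)}>0$ on $(1,\infty)$; the numerator of $(\ln R)'$ is indeed $p(x)=x^4+4x^3-2x^2-4x-3$ with $p(1)=-4$ and $p'>0$ on $[1,\infty)$; the bound $-(\ln R)'<\frac12$ on $(1,x_1)$ follows from $-p(x)<4$ and the denominator exceeding $8$; $\psi'/\psi$ is strictly decreasing on $[2,\infty)$ since $\psi''<0<\psi$ there; and $\frac{\psi'(5/2)}{\psi(5/2)}=\frac{\pi^2/2-40/9}{8/3-\gamma-2\ln2}\approx0.697>\frac12$, so $\phi'>0$ on all of $(1,\infty)$ and the gluing at $x=1$ is sound. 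Two cosmetic remarks: $p(1.3)\approx0.064$ is positive only by a hair, whereas all you actually need is $x_1\le\frac32$ (so that $x+1\le\frac52$), and $p\bigl(\frac32\bigr)=\frac{81}{16}$ gives a comfortable margin; and it is worth stating the L'Hospital-type hypothesis ($g(1)=h(1)=0$, $h'\ne0$) explicitly when you invoke the rule on $(1,T)$ for arbitrary $T$. As for what each route buys: the paper's citation disposes of the conjecture by appeal to a published, more general monotonicity theorem, while your elementary digamma/trigamma estimate gives an explicit, checkable proof that is independent of the integral-representation machinery developed here for $h_1$.
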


\begin{conj}[{\cite[Section~5.2]{refine-Ivady-gamma-PMD.tex}}]\label{conj-second}
For $\tau\ge0$, the function
\begin{equation}
g_\tau(x)=
\begin{cases}
    \dfrac{\ln\Gamma(x)}{\ln(x^2+\tau)-\ln(x+\tau)},& x\ne1\\
    -(1+\tau)\gamma, & x=1
  \end{cases}
\end{equation}
is strictly increasing on $(0,\infty)$.
\end{conj}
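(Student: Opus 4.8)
The plan is to turn the monotonicity statement into a sign question and then whittle that question down by elementary means, leaving one sharp digamma inequality as the hard core. Writing $D_\tau(x)=\ln(x^2+\tau)-\ln(x+\tau)$, so that $g_\tau(x)=\ln\Gamma(x)/D_\tau(x)$ and
\[
g_\tau'(x)=\frac{\Phi_\tau(x)}{D_\tau(x)^2},\qquad \Phi_\tau(x):=\psi(x)D_\tau(x)-\ln\Gamma(x)\,D_\tau'(x),\quad \psi=(\ln\Gamma)',
\]
and noting $D_\tau(x)^2>0$ for $x\ne1$, $\Phi_\tau(1)=0$, and $g_\tau$ continuous at $1$, it suffices to prove $\Phi_\tau(x)>0$ for all $x\in(0,\infty)\setminus\{1\}$ and all $\tau\ge0$. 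First I would record the sign data: $\psi$ has a unique zero $x_\ast\approx1.46$ and is negative to its left; $\ln\Gamma>0$ on $(0,1)\cup(2,\infty)$ and $<0$ on $(1,2)$; $D_\tau$ has the sign of $x-1$; and $D_\tau'(x)=\dfrac{x^2+2\tau x-\tau}{(x^2+\tau)(x+\tau)}$ has the sign of $x-x_\tau$, where $x_\tau=\sqrt{\tau^2+\tau}-\tau\in(0,1)$ for $\tau>0$ (for $\tau=0$, $D_0(x)=\ln x$, $D_0'(x)=1/x>0$, and the interval $(0,x_\tau)$ below is empty). On $(0,x_\tau)$ all four quantities have a fixed sign — $\psi<0$, $D_\tau<0$, $D_\tau'<0$, $\ln\Gamma>0$ — so $\Phi_\tau=\psi D_\tau-\ln\Gamma\,D_\tau'>0$ there for free.

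On $(x_\tau,\infty)$, where $D_\tau'>0$, I would divide through and study $\Psi_\tau(x):=\psi(x)\dfrac{D_\tau(x)}{D_\tau'(x)}-\ln\Gamma(x)$, which has the same sign as $\Phi_\tau$, satisfies $\Psi_\tau(1)=0$, $\Psi_\tau(x_\tau^+)=+\infty$ (when $\tau>0$), $\Psi_\tau(+\infty)=+\infty$, and whose derivative, using $(\ln\Gamma)'=\psi$ and $\bigl(D_\tau/D_\tau'\bigr)'=1-D_\tau D_\tau''/(D_\tau')^2$, collapses to
\[
\Psi_\tau'(x)=\frac{D_\tau(x)}{\bigl(D_\tau'(x)\bigr)^2}\,B_\tau(x),\qquad B_\tau(x):=\psi'(x)D_\tau'(x)-\psi(x)D_\tau''(x)=\bigl(D_\tau'(x)\bigr)^2\frac{d}{dx}\frac{\psi(x)}{D_\tau'(x)}.
\]
So, together with the sign change of $D_\tau$ at $x=1$, everything is governed by $B_\tau$, i.e.\ by the monotonicity of $x\mapsto\psi(x)/D_\tau'(x)=\psi(x)(x^2+\tau)(x+\tau)/(x^2+2\tau x-\tau)$. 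Where the pair $(\psi,D_\tau'')$ has the good sign combination ($\psi<0,\ D_\tau''>0$ or $\psi>0,\ D_\tau''<0$) one gets $B_\tau>0$ at once; when $B_\tau>0$ on all of $(x_\tau,\infty)$ — which one checks is the case once $\tau$ is not too small — the factor $D_\tau$ makes $\Psi_\tau$ decrease from $+\infty$ to $0$ on $(x_\tau,1)$ and increase from $0$ to $+\infty$ on $(1,\infty)$, hence $\Psi_\tau>0$ off $x=1$ and we are done. The case $\tau=0$ is self-contained in a slightly different guise: with $G(x)=x\psi(x)\ln x-\ln\Gamma(x)=x\Psi_0(x)$ one has $G'(x)=\ln x\,\bigl(x\psi(x)\bigr)'$, and $\bigl(x\psi(x)\bigr)'=\psi(x)+x\psi'(x)$ is strictly increasing because its derivative equals $2\sum_{n\ge0}\dfrac{n}{(x+n)^3}>0$, so it vanishes exactly once, at some $a\in(0,1)$; the resulting sign pattern of $G'$ ($+$ on $(0,a)$, $-$ on $(a,1)$, $+$ on $(1,\infty)$) together with $G(0^+)=G(1)=0$ forces $G>0$ on $(0,\infty)\setminus\{1\}$, hence $g_0'>0$ there.

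The genuine obstacle is the range of small $\tau>0$. There $\psi/D_\tau'$ is not globally monotone on $(x_\tau,\infty)$: it increases near $x_\tau$ (where the pole of $D_\tau'$ dominates), then crosses a band near the zero $a$ of $(x\psi(x))'$ on which, for $\tau$ small, $B_\tau$ is close to $B_0=(x\psi(x))'/x^2<0$, and then increases again. Consequently $\Psi_\tau$ acquires an interior local minimum at the point $p_\tau\in(x_\tau,1)$ where $B_\tau$ first changes sign, and the whole statement reduces to showing $\Psi_\tau(p_\tau)>0$, i.e.\ $|\psi(p_\tau)|\,|D_\tau(p_\tau)|>\ln\Gamma(p_\tau)\,D_\tau'(p_\tau)$, uniformly in $\tau$. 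This is delicate precisely because the leading asymptotics of the two sides cancel as $\tau\to0$ (both grow like $-\tfrac13\ln\tau$, with $p_\tau\sim\tau^{1/3}$), so the argument has to go one order deeper. I expect to do this with sharp two–sided bounds for $\psi$ and $\psi'$, refined near $x=x_\ast$ — or, in the spirit of the present paper, with the Binet/Cauchy–type integral representations of $\psi$, $\psi'$ and $\ln\Gamma$ — plus a case split of $(x_\tau,1)$ according to the sign of $D_\tau''$. As a cross–check and possible shortcut, one may use the identity $g_\tau(x)=f_\tau(x)-\dfrac{\ln x}{D_\tau(x)}$ coming from $\ln\Gamma(x)=\ln\Gamma(x+1)-\ln x$, which reduces the conjecture to the monotonicity of $f_\tau$ on $(0,\infty)$ (Conjecture~\ref{gamma-ln-ratio-conj} when $\tau=1$) together with the decreasingness of $x\mapsto(\ln x)/D_\tau(x)$.
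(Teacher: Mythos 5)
First, a point of orientation: the paper does not prove Conjecture~\ref{conj-second} at all. It is quoted as an open conjecture, and the only progress the paper records is the partial verification at $\tau=1$, obtained from the identity $g_\tau=f_\tau-h_\tau$ together with the increasingness of $f_1$ (Conjecture~\ref{gamma-ln-ratio-conj}, proved elsewhere) and the complete monotonicity of $h_1$ (this paper's main theorem). So there is no proof in the paper to compare against; the question is whether your argument actually closes the conjecture, and it does not.

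Your reduction to $\Phi_\tau(x)=\psi(x)D_\tau(x)-\ln\Gamma(x)D_\tau'(x)>0$, the sign bookkeeping on $(0,x_\tau)$, and the $\tau=0$ case via $G(x)=x\psi(x)\ln x-\ln\Gamma(x)$ (note $G=\Psi_0$, not $x\Psi_0$ --- a harmless slip) are fine. But two essential steps are missing. (i) The claim that $B_\tau(x)=\psi'(x)D_\tau'(x)-\psi(x)D_\tau''(x)>0$ on all of $(x_\tau,\infty)$ ``once $\tau$ is not too small'' is merely asserted; no threshold is produced and no proof is given, so even the ``large $\tau$'' half of the argument is open. (ii) For small $\tau>0$ you yourself identify the interior minimum $\Psi_\tau(p_\tau)$ as the crux and then only say you ``expect'' to control it with refined bounds on $\psi,\psi'$; since, as you note, the two competing terms cancel to leading order as $\tau\to0^+$ (both of size $-\tfrac13\ln\tau$ with $p_\tau\sim\tau^{1/3}$), this is exactly the hard core of the conjecture and it is left unproved. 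Finally, the proposed shortcut $g_\tau=f_\tau-\ln x/D_\tau$ cannot rescue the general case: by the answer to Problem~\ref{OP-debrecen}, $f_\tau$ fails to be increasing on $(0,1)$ once $\tau\ge\frac{6\gamma}{\pi^2-12\gamma}\approx1.176$ (and is decreasing there for $\tau\ge5.32\dotsc$), and the decreasingness of $\ln x/D_\tau=h_\tau$ is itself only established for $\tau=1$ (Conjecture~\ref{conj-third}, proved in this paper); so that route gives at best the $\tau=1$ case already noted in the paper, not the statement for all $\tau\ge0$. As it stands, your proposal proves the conjecture only for $\tau=0$ and reduces the rest to two unverified claims.
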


\begin{conj}[{\cite[Section~5.2]{refine-Ivady-gamma-PMD.tex}}]\label{conj-third}
For $\tau\ge0$, let
\begin{equation}\label{ln-x-frac-eq}
  h_\tau(x)=
  \begin{cases}
  \dfrac{\ln x}{\ln(x^2+\tau)-\ln(x+\tau)},&x\ne1\\
  1+\tau,&x=1
\end{cases}
\end{equation}
on $(0,\infty)$. The function $h_1$ is completely monotonic on $(0,\infty)$.
\end{conj}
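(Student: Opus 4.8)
The plan is to produce an explicit integral representation of $h_1$ by Cauchy's integral formula and to read complete monotonicity off from it. Viewed as a function of a complex variable, with the principal branches of $\ln z$, $\ln(z+1)$ and $\ln(z^{2}+1)$, the function $h_1(z)=\dfrac{\ln z}{\ln(z^{2}+1)-\ln(z+1)}$ is single-valued and holomorphic on $\mathbb{C}$ with the three slits $(-\infty,0]$, $\{\mathrm{i}t:t\ge1\}$ and $\{-\mathrm{i}t:t\ge1\}$ removed (the cut $(-\infty,-1]$ of $\ln(z+1)$ is absorbed into $(-\infty,0]$, while the two imaginary slits come from the zeros $\pm\mathrm{i}$ of $z^{2}+1$), with a removable singularity at $z=1$. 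One checks that $h_1$ is real and positive on $(0,\infty)$, that $h_1(\bar z)=\overline{h_1(z)}$, that $h_1(z)\to1$ as $z\to\infty$ with $h_1(z)-1=O\bigl(1/(z\ln z)\bigr)$, and that at the branch points $h_1(z)\to0$ near $\pm\mathrm{i}$ while $h_1(z)\sim-\dfrac{\ln z}{z}$ near $0$.

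Next I would apply Cauchy's formula on a contour running along both banks of the three slits, with small detours around $0$, $\pm\mathrm{i}$ and $1$, using a once-subtracted kernel $\dfrac{x-x_0}{(w-x)(w-x_0)}$ to kill the circle at infinity and to handle the mild growth of the bank integrands at the origin and at infinity. Deforming, this yields a representation of the form
\[
h_1(x)=1-\frac1\pi\int_0^\infty\frac{\Im h_1(-t+\mathrm{i}0^{+})}{x+t}\,\td t+\mathcal{J}(x),
\]
where a direct computation with the principal branches gives
\[
\Im h_1(-t+\mathrm{i}0^{+})=\frac{\pi}{\ln\frac{t^{2}+1}{1-t}}>0\ \ (0<t<1),\qquad
\Im h_1(-t+\mathrm{i}0^{+})=\frac{\pi\ln\frac{t^{3}+t}{t-1}}{\bigl[\ln\frac{t^{2}+1}{t-1}\bigr]^{2}+\pi^{2}}>0\ \ (t>1),
\]
so the contribution of $(-\infty,0]$ has a definite sign, while $\mathcal{J}$ collects the two imaginary slits: since the denominator of $h_1$ jumps by $2\pi\mathrm{i}$ across $\{\mathrm{i}t:t\ge1\}$ whereas $\ln z$ does not, combining the two conjugate slits produces $\mathcal{J}(x)=\dfrac1\pi\displaystyle\int_1^\infty\dfrac{a(t)\,x+b(t)}{x^{2}+t^{2}}\,\td t$ with explicit real weights $a,b$ assembled from $\ln t$, $\arctan t$ and $\ln\dfrac{t^{2}-1}{\sqrt{1+t^{2}}}$.

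The step I expect to be the main obstacle is the term $\mathcal{J}$. The kernels $x/(x^{2}+t^{2})$ and $1/(x^{2}+t^{2})$ are not completely monotonic in $x$, so $\mathcal{J}$ is not completely monotonic summand by summand; equivalently, in the inverse-Laplace picture the two imaginary slits contribute an oscillatory term $\pm\tfrac1\pi\int_1^\infty\Re\bigl[e^{\mathrm{i}st}\Delta_+(s)\bigr]\,\td s$ of indefinite sign to the representing density of $h_1$, whereas $(-\infty,0]$ contributes a term of definite sign. One must show that the total representing density is nonetheless nonnegative, i.e., that the definite part dominates the oscillatory part for every $t>0$; by the integral characterization of $\mathcal{C}[(0,\infty)]$ recalled in Section~1 this is precisely complete monotonicity of $h_1$. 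I expect this to be settled by an explicit evaluation or regrouping of $\mathcal{J}$ — a natural candidate for the closed form is a combination built from the Stieltjes function $\dfrac{\ln x}{x-1}=\displaystyle\int_0^\infty\frac{\td s}{(1+s)(x+s)}$, exhibiting $h_1$ as a sum of manifestly completely monotonic pieces — or by a contour deformation folding the imaginary slits back onto $(-\infty,0]$ via the identity $\dfrac{x^{2}+x}{x^{2}+1}\cdot\dfrac{x^{2}+1}{x+1}=x$, equivalently $\ln x=[\ln(x^{2}+1)-\ln(x+1)]+[\ln(x^{2}+x)-\ln(x^{2}+1)]$, which rewrites $h_1=1+\dfrac{\ln\frac{x^{2}+x}{x^{2}+1}}{\ln\frac{x^{2}+1}{x+1}}$ and treats numerator and denominator symmetrically. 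An alternative, avoiding the imaginary slits in the final statement though not in the analysis, is to prove that $\dfrac1{h_1(x)}=\dfrac{\ln(x^{2}+1)-\ln(x+1)}{\ln x}$ is a Bernstein function on $(0,\infty)$ — nonnegative with completely monotonic derivative — for then $h_1$ is the composition of the completely monotonic map $t\mapsto1/t$ with a Bernstein function, hence completely monotonic. In each case the technical core is the same careful sign analysis across the cuts, after which the conjecture follows at once.
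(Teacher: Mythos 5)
Your plan stops exactly where the conjecture lives, and its sign bookkeeping points the wrong way. By Stieltjes--Perron inversion, the density that a cut contributes to a representation by kernels $\frac1{x+t}$ is $-\frac1\pi\Im$ of the boundary value from above; since you (correctly, for your difference-of-logs branch) find $\Im h_1(-t+\mathrm{i}0^+)>0$ for all $t>0$, the negative-axis piece of your decomposition is $1$ \emph{minus} a completely monotonic function, i.e.\ the ``definite-sign'' part enters with the unfavorable sign, and complete monotonicity of $h_1$ would have to be produced entirely by recombining it with the constant, the subtracted kernel, and the slit term $\mathcal{J}$. So ``show the definite part dominates the oscillatory part'' is not even the right target, and in any case none of the devices you list for handling $\mathcal{J}$ (closed-form evaluation, regrouping into manifestly completely monotonic pieces, folding the slits back, or proving $1/h_1$ Bernstein) is carried out; each of them is essentially a restatement of the nonnegativity that has to be proved. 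As written this is a program with the decisive step missing, not a proof. A further symptom that your decomposition genuinely differs from a one-cut representation: your boundary density $\frac1\pi\Im h_1(-t+\mathrm{i}0^+)=\ln\frac{t(1+t^2)}{t-1}\big/\bigl\{\bigl[\ln\frac{1+t^2}{t-1}\bigr]^2+\pi^2\bigr\}$ for all $t>1$ agrees with the paper's boundary values only on $1<t<1+\sqrt2$ and differs on $[1+\sqrt2,\infty)$, the discrepancy being exactly what $\mathcal{J}$ would have to absorb.

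The paper avoids both obstacles by construction. It uses the principal branch of the logarithm of the \emph{ratio}, $h(z)=\ln z\big/\ln\frac{1+z^2}{1+z}$, and the rewriting $H=h-1=\ln\frac{z(1+z)}{1+z^2}\big/\ln\frac{1+z^2}{1+z}$ --- precisely the identity $\ln x=[\ln(x^2+1)-\ln(x+1)]+[\ln(x^2+x)-\ln(x^2+1)]$ that you mention only as a ``candidate'' at the end --- so the Cauchy-formula argument runs over a keyhole contour around $(-\infty,0]$ alone (the threshold $1+\sqrt2$ reappears as a case split in the boundary values, Lemma~\ref{lem-h-Im}). Crucially, Cauchy's formula is applied not to $h$ but to $G(z)=zH(z)$: Lemmas~\ref{lem-zk} and~\ref{lem-zk-2} give the decay of $zH(z)$ at infinity and of $z^2H(z)$ at the origin, so the circular arcs vanish without any subtracted kernel, and the boundary function satisfies $\Im[(-t+\varepsilon i)H(-t+\varepsilon i)]\to-\pi\rho(t)\le0$, yielding the genuinely nonnegative Stieltjes representation~\eqref{zH(z)-int-exp}, with $\rho\ge0$ checked by the elementary inequalities $\frac{t(1+t^2)}{t-1}>1$ and $\frac{1+t^2}{t(t-1)}>1$ --- no cancellation between competing contributions is ever needed. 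Dividing by $z$ and applying the convolution theorem (Lemma~\ref{convlotion}) converts this into the Laplace representation~\eqref{h(z)-int-representation} with nonnegative density, so $H$ and $h=1+H$ are completely monotonic, which is the conjecture (Theorem~\ref{third-conj-thm-conseq} then extracts degrees and the Stieltjes/complete Bernstein properties). If you want to salvage your route, switch to the ratio branch and to $zH(z)$ from the outset, or else prove directly that your negative-axis term, the subtraction terms, and $\mathcal{J}$ recombine into the single nonnegative density $\rho$; with the difference-of-logs branch and $h_1$ itself, positivity is simply not visible term by term.
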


Problem~\ref{OP-debrecen} was answered in~\cite{Kupan-Szasz-MIA-2014} and the answer reads that the function $f_\tau(x)$ is increasing on $(0,1)$ if and only if $0\le\tau<\frac{6\gamma}{\pi^2-12\gamma}=1.176\dotsc$ and that it is decreasing on $(0,1)$ if and only if $\tau\ge\frac{\pi^2-6\gamma}{18-12\gamma-\pi^2}=5.321\dotsc$.
Conjecture~\ref{gamma-ln-ratio-conj} was confirmed by~\cite[Theorem~1]{mon-funct-gamma-unit-ball.tex}. Consequently, by the relation
\begin{equation}
h_\tau(x)+g_\tau(x)=f_\tau(x),
\end{equation}
it follows that the function $g_1(x)$ is increasing on $(0,\infty)$, which is a partial verification to Conjecture~\ref{conj-second} for $0\le\tau\le1$.
\par
The aim of this paper is to verify Conjecture~\ref{conj-third}. To attain our aim, we need the following knowledge.
\par
It is easy to obtain that $\lim_{x\to\infty}h(x)=1$. Let
\begin{equation}\label{H-h-relation-eq}
H(x)=h(x)-1=
  \begin{cases}
  \dfrac{\ln x+\ln(1+x)-\ln(1+x^2)}{\ln(1+x^2)-\ln(1+x)},&x\ne1,\\
  1,&x=1.
\end{cases}
\end{equation}
As usual, we use $\ln x$ for the logarithmic function having base $e$ and applied to the positive argument $x>0$. Further, the principal branch of the holomorphic extension of $\ln x$ from the open half-line $(0,\infty)$ to the cut plane
$
\mathcal{A}=\mathbb{C}\setminus(-\infty,0]
$
is denoted by
$
\ln z=\ln|z|+i\arg z,
$
where $i=\sqrt{-1}\,$ and the argument of $z$ satisfies $-\pi<\arg z<\pi$. It is not difficult to see that the principal branchs of the holomorphic extensions of $h(x)$ and $H(z)$ to $\mathcal{A}$ are
\begin{equation}\label{ln-x-frac-eq-complex}
  h(z)=
  \begin{cases}
  \dfrac{\ln z}{\ln\frac{1+z^2}{1+z}},&z\ne1\\
  2,&z=1
\end{cases}
\end{equation}
and
\begin{equation}\label{ln-x-frac-eq-extension}
  H(z)=
  \begin{cases}
  \dfrac{\ln \frac{z(1+z)}{1+z^2}}{\ln\frac{1+z^2}{1+z}},&z\ne1,\\
  1,&z=1.
\end{cases}
\end{equation}
\par
By Cauchy integral formula in the theory of complex functions, we will obtain more and stronger results than Conjecture~\ref{conj-third}, which can be formulated as the following theorems.

\begin{thm}\label{third-conj-thm}
For $z\in\mathcal{A}$,
\begin{enumerate}
\item
the principal branch of the complex function $zH(z)$ has an integral representation
\begin{equation}\label{zH(z)-int-exp}
zH(z)=\int_0^\infty\frac{\rho(t)}{t+z}\td t,
\end{equation}
where
\begin{equation}
\rho(t)=
\begin{cases}
\dfrac{t}{\ln\frac{1+t^2}{1-t}},&0<t<1\\
0, & t=1\\
\dfrac{t\ln\frac{t(1+t^2)}{t-1}} {\bigl(\ln\frac{1+t^2}{t-1}\bigr)^2+\pi^2}, &1<t<1+\sqrt2\,\\
\dfrac{t\ln\frac{1+t^2}{t(t-1)}}{\bigl(\ln\frac{1+t^2}{t-1}\bigr)^2+\pi^2}, &1+\sqrt2\,\le t<\infty
\end{cases}
\end{equation}
is non-negative on $(0,\infty)$;
\item
the principal branch of the complex function $\frac1{z^2H(z)}$ has an integral representation
\begin{equation}\label{frac1z2H(z)-int}
\frac1{z^2H(z)}=\int_0^\infty\frac{\rho(t)}{\varrho(t)}\frac1{t+z}\td t,
\end{equation}
where
\begin{equation*}
\varrho(t)=
\begin{cases}
\dfrac{t\bigl\{\bigl[\ln\frac{1+t^2}{t(1-t)}\bigr]^2+\pi^2\bigr\}} {\bigl(\ln\frac{1+t^2}{1-t}\bigr)^2},&0<t<1\\
t, & t=1\\
\dfrac{t\bigl\{\bigl[\ln\frac{1+t^2}{t-1}\ln\frac{1+t^2}{t(t-1)}+2\pi^2\bigr]^2 +\bigl[\pi\ln\frac{t(1+t^2)}{t-1}\bigr]^2\bigr\}} {\bigl[\bigl(\ln\frac{1+t^2}{t-1}\bigr)^2+\pi^2\bigr]^2}, &1<t<1+\sqrt2\,\\
\dfrac{t\bigl[\ln\frac{1+t^2}{t(t-1)}\bigr]^2} {\bigl(\ln\frac{1+t^2}{t-1}\bigr)^2+\pi^2}, &1+\sqrt2\,\le t<\infty
\end{cases}
\end{equation*}
is positive on $(0,\infty)$;
\item
the principal branch of the complex function $h(z)$ has an integral representation
\begin{equation}\label{h(z)-int-representation}
H(z)=h(z)-1=\int_0^\infty\biggl[\int_0^\infty \frac{\rho(u)}{u}\bigl(1-e^{-t u}\bigr)\td u\biggr]e^{-tz}\td t;
\end{equation}
\item
the principal branch of the complex function $\frac1{H(z)}$ has an integral representation
\begin{equation}\label{frac1H(z)-int}
\frac1{H(z)}=z\int_0^\infty \frac{\rho(t)}{\varrho(t)}\td t -\int_0^\infty\bigl(1-e^{-zt}\bigr) \biggl[\int_0^\infty \frac{u^2\rho(u)}{\varrho(u)}e^{-ut}\td u\biggr]\td t.
\end{equation}
\end{enumerate}
\end{thm}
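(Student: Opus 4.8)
The plan is to apply the Cauchy integral formula (equivalently, a Stieltjes–Perron type inversion) to the function $zH(z)$ on the cut plane $\mathcal{A}$, and then to derive the remaining three representations as algebraic consequences of part (1) together with the structural propositions recalled in Section~1. First I would check that $zH(z)$ is holomorphic on $\mathcal{A}=\mathbb{C}\setminus(-\infty,0]$: the only candidates for singularities are the zeros of $\ln\frac{1+z^2}{1+z}$, and the point $z=1$; at $z=1$ the removable-singularity value $2$ in~\eqref{ln-x-frac-eq-complex} makes $h(z)$, hence $H(z)$ and $zH(z)$, analytic there, and one verifies that $\ln\frac{1+z^2}{1+z}$ has no other zero in $\mathcal{A}$ (the equation $1+z^2=1+z$ gives $z\in\{0,1\}$, and $0\notin\mathcal{A}$). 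Next I would establish the decay $zH(z)=O(1)$ — in fact $zH(z)\to 0$ is false; rather $zH(z)$ grows like $z/\ln z$, so the correct statement is that $H(z)\to 0$ and $zH(z)$ is of order $o(z)$ with the right boundary behaviour to make the Stieltjes representation~\eqref{zH(z)-int-exp} valid; I would make this precise by estimating $H(z)$ on large circles and showing the contribution of the large arc in the Cauchy formula vanishes.

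The heart of the argument is the contour computation. Fix $z\in\mathcal{A}$ and integrate $\frac{wH(w)}{w-z}$ over a keyhole contour that hugs the cut $(-\infty,0]$, with small circular indentations around $w=0$ and (if needed) around the branch point $w=-i$ coming from $\ln(1+w^2)$ — note $\pm i\in\mathcal{A}$ so actually $1+w^2$ vanishes at $w=\pm i$ which are \emph{interior} points; I must be careful, and the honest statement is that $H$ has logarithmic branch points at $w=\pm i$, so the contour must also be slit along two rays joining $\pm i$ to the cut. On the two sides of the negative real axis, $\ln w$ jumps by $2\pi i$, and substituting $w=-t$ (so $w=te^{\pm i\pi}$, $t>0$) one computes $\Im\bigl[(-t)H(-t\pm i0)\bigr]$; the piecewise formula for $\rho(t)$ is exactly this imaginary part (up to the sign and a factor of $\pi$), and the three subcases $0<t<1$, $1<t<1+\sqrt2$, $t\ge 1+\sqrt2$ arise from sign changes of $1-t$, of $\frac{t(1+t^2)}{t-1}$ versus $1$, etc., which flip which logarithms are real and which pick up an extra $i\pi$. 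This bookkeeping — tracking the arguments of $1+t^2$, $1-t$, $1+t^2-t$ and their combinations across the threshold $t=1+\sqrt2$ (the positive root of $t^2-2t-1$, i.e. where $t-1=\frac{1+t^2}{t(t-1)}\cdot(\cdots)$ changes the dominant log) — is the main obstacle; everything else is routine once it is done. Letting the indentations shrink and the big arc go to infinity yields~\eqref{zH(z)-int-exp}, and nonnegativity of $\rho$ on each piece is then checked by elementary calculus (each numerator and denominator is manifestly positive on its subinterval).

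For part (2), I would write $z^2H(z)=z\cdot zH(z)$ and use that, by~\eqref{zH(z)-int-exp}, $zH(z)$ is a Stieltjes-type transform hence (after multiplying by $z$) a complete Bernstein function of $z$; by Proposition on page~\pageref{Theorem-7.3-Schilling-Song-Vondracek-2nd} (Theorem~7.3 of~\cite{Schilling-Song-Vondracek-2nd}), its reciprocal $\frac1{z^2H(z)}$ is again a Stieltjes function, so it admits a representation $\int_0^\infty\frac{d\nu(t)}{t+z}$; identifying the density $\frac{\rho(t)}{\varrho(t)}$ amounts to applying the same Stieltjes–Perron inversion to $\frac1{w^2H(w)}$ along the cut, and the formula for $\varrho(t)$ is precisely $\bigl|(-t)^2H(-t+i0)\bigr|^2\big/\rho(t)$ times $t$ — i.e. it records the modulus squared of the boundary value divided by its imaginary part, which is why $\varrho$ has the quotient-of-squares shape displayed. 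Positivity of $\varrho$ is then immediate. For part (3), I would divide~\eqref{zH(z)-int-exp} by $z$ to get $H(z)=\int_0^\infty\frac{\rho(t)}{t(t+z)}\,dt$, rewrite $\frac{1}{t+z}=\int_0^\infty e^{-s(t+z)}\,ds$, interchange the two integrals (justified by Tonelli since $\rho\ge0$), and perform the inner $t$-integral, which produces exactly the bracketed Laplace-type kernel in~\eqref{h(z)-int-representation}; this simultaneously exhibits $H$ (hence $h$) as a Bernstein, indeed complete Bernstein, function via~\eqref{Levy-Khintchine-m(t)}. Finally part (4) follows by applying the Lévy–Khintchine representation~\eqref{Levy-Khintchine-repreentation} to the complete Bernstein function $\frac1{H(z)}$: its L\'evy triplet is read off from part (2) — the linear coefficient $b$ is $\int_0^\infty\frac{\rho(t)}{\varrho(t)}\,dt$ and $a=0$ (since $\frac1{H(z)}\to 0$ as $z\to 0^+$, as $H(0^+)=+\infty$), and the L\'evy density is $u\mapsto\int_0^\infty\frac{u^2\rho(u)}{\varrho(u)}e^{-ut}\,du$ after the usual $\frac1{s+z}=\int_0^\infty(1-(1-e^{-zt}))e^{-st}\,dt$ manipulation applied to~\eqref{frac1z2H(z)-int}; collecting terms gives~\eqref{frac1H(z)-int}. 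The one subtlety to double-check throughout is the legitimacy of pushing the large-circle contribution to zero in each inversion and the convergence of all the iterated integrals near $t=0,1,1+\sqrt2$ and at $\infty$, where $\rho(t)/\varrho(t)$ behaves like $1/(t\ln^2 t)$ and is integrable against $\frac1{1+t}$.
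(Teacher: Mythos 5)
Your overall route is the same as the paper's: apply the Cauchy integral formula to $zH(z)$ on a keyhole contour around the cut, read $\rho(t)$ off the boundary values of $\Im[(-t+\varepsilon i)H(-t+\varepsilon i)]$ (the content of Lemma~\ref{lem-h-Im}), then get \eqref{frac1z2H(z)-int} by the same inversion applied to $\frac1{z^2H(z)}$, and \eqref{h(z)-int-representation}, \eqref{frac1H(z)-int} by Laplace-transform manipulations of \eqref{zH(z)-int-exp} and \eqref{frac1z2H(z)-int}. However, your treatment of the large arc contains a genuine error at exactly the point the method cannot survive without. You assert that ``$zH(z)\to0$ is false'' and that $zH(z)$ grows like $z/\ln z$. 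This is wrong: since $\ln\frac{z(1+z)}{1+z^2}\sim\frac{z-1}{1+z^2}\sim\frac1z$ and $\ln\frac{1+z^2}{1+z}\sim\ln z$ as $|z|\to\infty$, one has $zH(z)\sim\frac1{\ln z}\to0$; it is $z^2H(z)$ that behaves like $z/\ln z$. The uniform decay of $zH(z)$ is precisely the paper's Lemma~\ref{lem-zk}, and it is what annihilates the big-arc term, which is bounded by a constant times $\max_{|w|=r}|wH(w)|$. Your proposed substitute, ``$zH(z)=o(z)$'', does not suffice (it allows $\max_{|w|=r}|wH(w)|\to\infty$, in which case the arc term need not vanish), and if your stated growth were correct, \eqref{zH(z)-int-exp} itself could not hold, because its right-hand side tends to $0$ as $z\to+\infty$. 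You also need the companion estimate $z^2H(z)\to0$ as $z\to0$ (Lemma~\ref{lem-zk-2}) for the small indentation at the origin, which your sketch does not address.

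The second issue is that you raise the behaviour at $z=\pm i$ and then leave it unresolved. If, as you propose, the contour is slit along curves joining $\pm i$ to the cut (the natural curves being the arcs on which $\frac{1+z^2}{1+z}\in(-\infty,0)$, which run from $\pm i$ to the point $-\bigl(1+\sqrt2\,\bigr)$ --- incidentally the source of the threshold $t=1+\sqrt2$ in $\rho$), then the Cauchy formula acquires jump integrals along those slits, and you never show that these contributions vanish or explain how they are absorbed into $\int_0^\infty\frac{\rho(t)}{t+z}\td t$; without that, your contour computation does not yield \eqref{zH(z)-int-exp}. (The paper itself simply applies Lemma~\ref{cauchy-formula} to $zH(z)$ on the whole enclosed region, so if you insist on the slits you must do this extra work.) Finally, a smaller slip: the Stieltjes--Perron density of $\frac1{z^2H(z)}$ at $-t$ is $\frac1\pi\,\Im\bigl[w^2H(w)\bigr]\big/\bigl|w^2H(w)\bigr|^2$ evaluated at $w=-t+i0$, which forces $\varrho(t)=t\lim_{\varepsilon\to0^+}\bigl|(-t+\varepsilon i)H(-t+\varepsilon i)\bigr|^2$, i.e.\ $\frac1t\bigl|(-t)^2H(-t+i0)\bigr|^2$, not ``$\bigl|(-t)^2H(-t+i0)\bigr|^2/\rho(t)$ times $t$'' as you wrote; note that your own later asymptotic $\rho(t)/\varrho(t)\sim1/(t\ln^2t)$ near $t=0$ is consistent with the former and not with the latter.
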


\begin{thm}\label{third-conj-thm-conseq}
For $x\in(0,\infty)$ and $z\in\mathcal{A}$,
\begin{enumerate}
\item
$h(x),H(x)\in\mathcal{C}[(0,\infty)]$, with the same integral representation~\eqref{h(z)-int-representation} and
\begin{equation}\label{H(z)-stieljes-int}
H(z)=\frac1z\int_0^\infty\frac{\rho(t)}t\td t-\int_0^\infty\frac{\rho(t)}{t(t+z)}\td t
\end{equation}
and of degree
\begin{equation}\label{H(x)-deg=1-eq}
\cmdeg{x}[h(x)]=\cmdeg{x}[H(x)]=1;
\end{equation}
\item
$\frac1{H(x)}\in\mathcal{B}[(0,\infty)]$ and $H(x)\in\mathcal{L}[(0,\infty)]$;
\item
$xH(x)\in\mathcal{S}$ and $xH(x)\in\mathcal{L}[(0,\infty)]$, with the integral representation~\eqref{zH(z)-int-exp} and of degree
\begin{equation}\label{xH(x)-st-deg-eq}
\stdeg{x}[xH(x)]=0;
\end{equation}
\item
$\frac1{xH(x)}\in\mathcal{CB}$ and $\frac1{xH(x)}\in\mathcal{OM}[(0,\infty)]$, with L\'evy-Khintchine representation
\begin{equation}\label{zH(z)-CB-int-eq}
\frac1{zH(z)}=\int_0^\infty\biggl[\int_0^\infty\frac{u\rho(u)}{\varrho(u)} e^{-tu}\td u\biggl] \bigl(1-e^{-zt}\bigr)\td t
\end{equation}
and of degree
\begin{equation}\label{xH(x)-cb-deg-eq}
\cbdeg{x}\biggl[\frac1{xH(x)}\biggr]=0 \quad \text{and}\quad \opdeg{x}\biggl[\frac1{xH(x)}\biggr]=0;
\end{equation}
\item
$\frac1{x^2H(x)}\in\mathcal{S}$ and $\frac1{x^2H(x)}\in\mathcal{L}[(0,\infty)]$, with the integral representation~\eqref{frac1z2H(z)-int} and of degree
\begin{equation}\label{x^2H(x)-st-deg-eq}
\stdeg{x}\biggl[\frac1{x^2H(x)}\biggr]=0;
\end{equation}
\item
$x^2H(x)\in\mathcal{CB}$ and $x^2H(x)\in\mathcal{OM}[(0,\infty)]$, with L\'evy-Khintchine representation
\begin{equation}\label{z^2H(z)-int-eq}
z^2H(z)=\int_0^\infty\biggl[\int_0^\infty{u\rho(u)}e^{-tu}\td u\biggl]\bigl(1-e^{-zt}\bigr)\td t
\end{equation}
and of degree
\begin{equation}\label{x^2H(x)-cb-deg-eq}
\cbdeg{x}\bigl[x^2H(x)\bigr]=0\quad \text{and}\quad \opdeg{x}\bigl[x^2H(x)\bigr]=0.
\end{equation}
\end{enumerate}
\end{thm}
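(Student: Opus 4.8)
The plan is to read all six items off the four integral representations of Theorem~\ref{third-conj-thm} after specializing $z=x\in(0,\infty)$, using the characterizations of Section~1 as black boxes: Widder's theorem [the Proposition containing~\eqref{berstein-1}] for complete monotonicity, the L\'evy--Khintchine theorem [the Proposition containing~\eqref{Levy-Khintchine-repreentation}] together with the Stieltjes representation of Definition~\ref{dfn-stieltjes}, the inclusions~\eqref{S-L-C-relation}, the dualities ``$f\in\mathcal{CB}\iff1/f\in\mathcal{S}$'' [\cite[Theorem~7.3]{Schilling-Song-Vondracek-2nd}] and ``$\mathcal{CB}=\{$positive operator monotone functions$\}$'' [\cite[Theorem~12.17]{Schilling-Song-Vondracek-2nd}], the Koumandos--Pedersen characterization [the Proposition reformulating~\cite[Remark~1.6]{Koumandos-Pedersen-JMAA-09}], and Definitions~\ref{deg-stieltjes-dfn} and~\ref{deg-op-dfn}. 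Apart from a handful of elementary convergence checks, the one substantial step is the sharp evaluation of the four occurring degrees, and that is where I expect the real work to lie.

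For item~(1): substituting $z=x$ into~\eqref{h(z)-int-representation} displays $H(x)$ as a Laplace--Stieltjes transform of a non-decreasing function, because $\rho\ge0$ makes the inner integral non-negative; Widder's theorem then gives $H\in\mathcal{C}[(0,\infty)]$ and hence $h=H+1\in\mathcal{C}[(0,\infty)]$, a positive constant being completely monotonic. Formula~\eqref{H(z)-stieljes-int} is the partial-fraction rearrangement of~\eqref{zH(z)-int-exp} via $\frac1{z(t+z)}=\frac1t\bigl(\frac1z-\frac1{t+z}\bigr)$ (the two resulting integrals may diverge individually but their difference equals $H(z)$). To obtain~\eqref{H(x)-deg=1-eq} I would first record the elementary expansion $H(x)=h(x)-1=\frac1{x\ln x}+o\!\bigl(\frac1{x\ln x}\bigr)$ as $x\to\infty$ — insert $\ln(1+x^2)=2\ln x+O(x^{-2})$ and $\ln(1+x)=\ln x+O(x^{-1})$ into~\eqref{H-h-relation-eq} — which already shows $H(\infty)=0$; then $\cmdeg{x}[H(x)]\ge1$ because $x[H(x)-H(\infty)]=xH(x)=\int_0^\infty\frac{\rho(t)}{t+x}\td t$ is a non-zero Stieltjes function, hence completely monotonic by~\eqref{S-L-C-relation}, while $\cmdeg{x}[H(x)]\le1$ because $x^{1+\varepsilon}H(x)\to\infty$ for every $\varepsilon>0$, which is incompatible with a completely monotonic (hence non-increasing) function. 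Since $h(\infty)=1$ and $h(x)-h(\infty)=H(x)$, the same conclusion holds for $h$.

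For items~(3)--(6): by~\eqref{zH(z)-int-exp} the function $xH(x)=\int_0^\infty\frac{\rho(t)}{t+x}\td t$ has exactly the form~\eqref{dfn-stieltjes} with $a=b=0$ (the only check is $\int_0^\infty\frac{\rho(t)}{1+t}\td t<\infty$, immediate from $\rho(0^+)=1$ and $\rho(t)=O((\ln t)^{-2})$ as $t\to\infty$), so $xH(x)\in\mathcal{S}$, and similarly~\eqref{frac1z2H(z)-int} gives $\frac1{x^2H(x)}\in\mathcal{S}$ since $\rho/\varrho\ge0$; being non-zero, both lie in $\mathcal{L}[(0,\infty)]$ by~\eqref{S-L-C-relation}. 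The duality of~\cite[Theorem~7.3]{Schilling-Song-Vondracek-2nd} then yields $\frac1{xH(x)}\in\mathcal{CB}$ and $x^2H(x)\in\mathcal{CB}$, which by~\cite[Theorem~12.17]{Schilling-Song-Vondracek-2nd} are positive operator monotone on $(0,\infty)$; their L\'evy--Khintchine densities $m_1(t)=\int_0^\infty\frac{u\rho(u)}{\varrho(u)}e^{-tu}\td u$ and $m_2(t)=\int_0^\infty u\rho(u)e^{-tu}\td u$ are those occurring in~\eqref{zH(z)-CB-int-eq} and~\eqref{z^2H(z)-int-eq}, both completely monotonic by Widder's theorem, while the L\'evy--Khintchine representation~\eqref{frac1H(z)-int} puts $\frac1{H(x)}$ in $\mathcal{B}[(0,\infty)]$. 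Finally $H(x)=x^{-1}\cdot\bigl(xH(x)\bigr)$ is a product of two members of $\mathcal{L}[(0,\infty)]$ (recall $x^{-1}\in\mathcal{L}[(0,\infty)]$) and $\mathcal{L}[(0,\infty)]$ is closed under products, so $H\in\mathcal{L}[(0,\infty)]$; this disposes of items~(2),(3),(5) apart from the degrees.

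The main obstacle will be forcing the four remaining degrees to equal $0$. By Definitions~\ref{deg-stieltjes-dfn} and~\ref{deg-op-dfn} one has $\stdeg{x}[xH(x)]=\cbdeg{x}\bigl[\tfrac1{xH(x)}\bigr]$, $\stdeg{x}\bigl[\tfrac1{x^2H(x)}\bigr]=\cbdeg{x}[x^2H(x)]$, and $\opdeg{x}$ of $\tfrac1{xH(x)}$ and $x^2H(x)$ agrees with the corresponding $\cbdeg{x}$, so~\eqref{xH(x)-st-deg-eq},~\eqref{xH(x)-cb-deg-eq},~\eqref{x^2H(x)-st-deg-eq},~\eqref{x^2H(x)-cb-deg-eq} all collapse to $\cmdeg{t}[m_1(t)]=0$ and $\cmdeg{t}[m_2(t)]=0$. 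Each $m_j$ is completely monotonic with $m_j(\infty)=0$, so I must show $t^\varepsilon m_j(t)$ is \emph{not} completely monotonic for any $\varepsilon>0$; by the Koumandos--Pedersen characterization this amounts to showing that the representing density of $m_j$ — namely $s\rho(s)$ for $m_2$ and $s\rho(s)/\varrho(s)$ for $m_1$ — cannot be written as $\frac1{\Gamma(\varepsilon)}\int_0^s(s-\tau)^{\varepsilon-1}\td\mu_\varepsilon(\tau)$ with $\mu_\varepsilon$ a non-negative measure. Here the precise shape of $\rho$ is decisive: since $\rho(1)=0$ while $\rho>0$ on $(0,1)\cup(1,\infty)$ and $\varrho>0$ throughout, each density vanishes at $s=1$ yet is strictly positive on $(0,1)$; if it equalled $\frac1{\Gamma(\varepsilon)}\int_0^s(s-\tau)^{\varepsilon-1}\td\mu_\varepsilon(\tau)$, then evaluation at $s=1$ together with $(1-\tau)^{\varepsilon-1}>0$ on $[0,1)$ would force $\mu_\varepsilon|_{[0,1)}=0$, whence the density would vanish identically on $(0,1)$, a contradiction. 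Hence $\cmdeg{t}[m_j]<\varepsilon$ for every $\varepsilon>0$, i.e. $\cmdeg{t}[m_j]=0$, and this closes all the remaining items of the theorem.
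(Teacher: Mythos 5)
Your proposal is correct in substance, and for all the membership claims it travels the same road as the paper: specialize the representations of Theorem~\ref{third-conj-thm} to $z=x>0$, read complete monotonicity off~\eqref{h(z)-int-representation}, the Stieltjes property of $xH(x)$ and $\frac1{x^2H(x)}$ off~\eqref{zH(z)-int-exp} and~\eqref{frac1z2H(z)-int}, and then invoke the dualities between $\mathcal{S}$, $\mathcal{CB}$ and positive operator monotone functions. Your small deviations there are harmless: you obtain $H\in\mathcal{L}[(0,\infty)]$ from closure of $\mathcal{L}$ under products with $x^{-1}$ instead of the quoted Bernstein--reciprocal theorem, and you correctly flag that in~\eqref{H(z)-stieljes-int} the two integrals diverge separately (because $\rho(0^+)=1$), a point the paper passes over silently; conversely, you merely assert rather than derive the L\'evy--Khintchine forms~\eqref{zH(z)-CB-int-eq} and~\eqref{z^2H(z)-int-eq}, so you should spell out the one-line Laplace computation $\frac{z}{t+z}=\int_0^\infty te^{-tu}\bigl(1-e^{-zu}\bigr)\td u$ plus Tonelli, as the paper does. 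The genuine divergence from the paper is in the degrees, which is indeed the substantive part. For~\eqref{H(x)-deg=1-eq} the paper bounds $\alpha\le-xH'(x)/H(x)\to1$ as $x\to0^+$, while you use $H(x)\sim\frac1{x\ln x}$ at infinity so that $x^{1+\varepsilon}H(x)\to\infty$; both work, and yours is simpler. For the four degree-zero claims the paper argues $\alpha\le-tm'(t)/m(t)\to0$ as $t\to0^+$, whereas you reduce everything to $\cmdeg{t}[m_j(t)]=0$ and exclude any positive degree via the Koumandos--Pedersen characterization combined with the structural fact $\rho(1)=0$ while $\rho>0$ on $(0,1)$. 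This buys genuine robustness: since $u\rho(u)/\varrho(u)\sim u^2$ as $u\to\infty$, an Abelian argument gives $-tm'(t)/m(t)\to3$, not $0$, as $t\to0^+$, so the paper's own limit step is at best delicate, while your vanishing-density argument does not depend on such asymptotics at all.

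One repair is needed in your degree argument: the representing density of $m_j$ is determined only almost everywhere, so you cannot literally evaluate the identity $g(s)=\frac1{\Gamma(\varepsilon)}\int_0^s(s-\tau)^{\varepsilon-1}\td\mu_\varepsilon(\tau)$ at the single point $s=1$. Since $g$ is continuous with $g(1)=0$ and $g>0$ on $(0,1)$, the fix is short: for admissible $s\to1$ and any $a<1$, bounding $(s-\tau)^{\varepsilon-1}$ below by a positive constant for $\tau\in[0,a]$ forces $\mu_\varepsilon([0,a])=0$, hence $\mu_\varepsilon([0,1))=0$, hence $g=0$ almost everywhere on $(0,1)$, contradicting its continuity and positivity there. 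With that patch, and the routine convergence checks you already indicate, your proof is complete.
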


\section{Remarks}

Before proving Theorems~\ref{third-conj-thm} and~\ref{third-conj-thm-conseq}, we supply some remarks on them.

\begin{rem}
It is listed in~\cite[Proposition~7.1]{Schilling-Song-Vondracek-2nd} and~\cite[p.~96]{Schilling-Song-Vondracek-2nd} that for $f(x)>0$,
\begin{enumerate}
\item
$f\in\mathcal{CB}$ if and only if $\frac{x}{f(x)}\in\mathcal{CB}$,
\item
$f(x)\in\mathcal{CB}$ if and only if $\frac1{f(1/x)}\in\mathcal{CB}$,
\item
$f(x)\in\mathcal{CB}$ if and only if $xf\bigl(\frac1x\bigr)\in\mathcal{CB}$,
\item
$f(x)\in\mathcal{S}$ if and only if $\frac1{f(1/x)}\in\mathcal{S}$,
\item
$f(x)\in\mathcal{S}$ if and only if $\frac1{xf(x)}\in\mathcal{S}$,
\item
$f(x)\in\mathcal{S}$ if and only if $\frac{f(x)}{\epsilon f(x)+1}\in\mathcal{S}$ for all $\epsilon>0$.
\end{enumerate}
From these properties and the fact that $H\bigl(\frac1x\bigr)=\frac1{H(x)}$, it follows that
\begin{enumerate}
\item
$\frac1{xH(x)}\in\mathcal{CB}$ if and only if ${x^2}{H(x)}\in\mathcal{CB}$,
\item
$xH(x)\in\mathcal{S}$ if and only if $\frac1{x^2H(x)}\in\mathcal{S}$,
\item
$xH(x)\in\mathcal{S}$ if and only if $\frac{xH(x)}{\epsilon xH(x)+1}\in\mathcal{S}$ for all $\epsilon>0$.
\end{enumerate}
\end{rem}

\begin{rem}
Corollary~7.4 in~\cite{Schilling-Song-Vondracek-2nd} states that $0<g\in\mathcal{S}$ if and only if $g(0^+)$ exists in $[0,\infty]$ and $g$ extends analytically to $\mathcal{A}$ such that $\Im z\cdot\Im g(z)\le0$. This means that
\begin{equation}
\Im z\cdot\Im [zH(z)]\le0, \quad \Im z\cdot\Im \frac1{x^2H(x)}\le0, \quad \text{and}\quad \Im z\cdot\Im \frac{zH(z)}{\epsilon zH(z)+1}\le0
\end{equation}
for all $\epsilon>0$ and $z\in\mathcal{A}$. Geometrically speaking, the three Stieltjes functions $zH(z)$, $\frac1{x^2H(x)}$, and $\frac{zH(z)}{\epsilon zH(z)+1}$ for all $\epsilon>0$ map the upper half-plane to the lower half-plane and vice versa.
\end{rem}

\section{Lemmas}
In order to prove our main results, the following lemmas are necessary.

\begin{lem}[Cauchy integral formula~{\cite[p.~113]{Gamelin-book-2001}}]\label{cauchy-formula}
Let $D$ be a bounded domain with piecewise smooth boundary. If $f(z)$ is analytic on $D$, and $f(z)$ extends smoothly to the boundary of $D$, then
\begin{equation}
f(z)=\frac1{2\pi i}\oint_{\partial D}\frac{f(w)}{w-z}\td w,\quad z\in D.
\end{equation}
\end{lem}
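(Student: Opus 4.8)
The plan is to obtain the formula from Cauchy's integral theorem by the classical small--circle deformation. First I would fix $z\in D$ and, since $D$ is open, pick $\epsilon_0>0$ with $\overline{B(z,\epsilon)}\subset D$ for every $0<\epsilon\le\epsilon_0$; then I would work on the punctured domain $D_\epsilon=D\setminus\overline{B(z,\epsilon)}$, which is again a bounded domain whose piecewise smooth boundary is $\partial D$ together with the circle $|w-z|=\epsilon$, the latter oriented clockwise so that $D_\epsilon$ stays on its left.

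Next I would note that the integrand $w\mapsto\dfrac{f(w)}{w-z}$ is analytic on a neighbourhood of $\overline{D_\epsilon}$, since $f$ is analytic on $D$ and smooth up to $\partial D$ while the only pole of $\frac1{w-z}$ has been excised. Hence Cauchy's integral theorem on $D_\epsilon$ gives
\begin{equation*}
0=\oint_{\partial D_\epsilon}\frac{f(w)}{w-z}\td w=\oint_{\partial D}\frac{f(w)}{w-z}\td w-\oint_{|w-z|=\epsilon}\frac{f(w)}{w-z}\td w ,
\end{equation*}
where the circle is now counterclockwise, so that $\oint_{\partial D}\frac{f(w)}{w-z}\td w$ equals $\oint_{|w-z|=\epsilon}\frac{f(w)}{w-z}\td w$ for all admissible $\epsilon$; in particular the left-hand side does not depend on $\epsilon$.

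Finally I would evaluate the circle integral as $\epsilon\to0^+$. Writing $w=z+\epsilon e^{i\theta}$ with $0\le\theta\le2\pi$ one has $\td w=i\epsilon e^{i\theta}\td\theta$, whence
\begin{equation*}
\oint_{|w-z|=\epsilon}\frac{f(w)}{w-z}\td w=i\int_0^{2\pi}f\bigl(z+\epsilon e^{i\theta}\bigr)\td\theta\longrightarrow 2\pi i\,f(z)
\end{equation*}
as $\epsilon\to0^+$, the limit following from the continuity of $f$ at $z$ (uniformly for $\theta\in[0,2\pi]$). Combining this with the $\epsilon$--independence just noted and dividing by $2\pi i$ yields $f(z)=\frac1{2\pi i}\oint_{\partial D}\frac{f(w)}{w-z}\td w$, which is the assertion.

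The only step needing care is the appeal to Cauchy's integral theorem on a bounded domain with \emph{piecewise} smooth boundary for a function merely analytic inside and smooth up to the boundary; this is precisely where the regularity hypotheses on $\partial D$ and on $f$ are used, and I would either quote that version directly from the cited text or reduce to it by applying Green's (Stokes') theorem to the closed form $\frac{f(w)}{w-z}\,\td w$ on $D_\epsilon$. An alternative that avoids the annulus entirely is to observe that $w\mapsto\frac{f(w)-f(z)}{w-z}$, defined to equal $f'(z)$ at $w=z$, has a removable singularity and is therefore analytic on $D$ and smooth up to $\partial D$; Cauchy's theorem on $D$ then gives $\oint_{\partial D}\frac{f(w)}{w-z}\td w=f(z)\oint_{\partial D}\frac{\td w}{w-z}$, reducing everything to the single fact $\frac1{2\pi i}\oint_{\partial D}\frac{\td w}{w-z}=1$. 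The rest is routine.
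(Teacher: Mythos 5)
This lemma is quoted in the paper without proof (it is cited from Gamelin, p.~113), so there is no in-paper argument to compare against; your small-circle deformation is exactly the standard textbook proof given in that reference, and it is correct. Your intermediate claim that $\frac{f(w)}{w-z}$ is ``analytic on a neighbourhood of $\overline{D_\epsilon}$'' overstates the hypotheses (near $\partial D$ the function is only analytic inside and smooth up to the boundary), but you repair this yourself in the closing paragraph by invoking the Green's-theorem version of Cauchy's theorem for functions analytic on $D$ and smooth up to $\partial D$, which is precisely the form needed.
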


\begin{lem}\label{lem-zk}
For $z=re^{\theta i}\in\mathcal{A}$, the complex function $zH(z)$ uniformly tends to $0$ as $r\to\infty$.
\end{lem}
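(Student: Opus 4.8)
The plan is to estimate the modulus of $zH(z)$ directly from the closed form~\eqref{ln-x-frac-eq-complex}. For $r=|z|$ large (so that in particular $z\ne1$) write
\[
zH(z)=\frac{z\ln\dfrac{z(1+z)}{1+z^2}}{\ln\dfrac{1+z^2}{1+z}},
\]
and bound the numerator and the denominator separately, making every bound depend only on $r$ and not on $\theta=\arg z$; uniform convergence will then be automatic.

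For the numerator I would use that $\dfrac{z(1+z)}{1+z^2}=1+\dfrac{z-1}{1+z^2}=:1+w$, where by the triangle inequality $|w|\le\dfrac{r+1}{r^2-1}=\dfrac1{r-1}$ for $r>1$. Once $r\ge3$, so that $|w|\le\tfrac12$, the elementary estimate $|\ln(1+w)|\le\sum_{n\ge1}\dfrac{|w|^n}{n}\le\dfrac{|w|}{1-|w|}\le2|w|$ gives $\bigl|z\ln\frac{z(1+z)}{1+z^2}\bigr|\le\dfrac{2r}{r-1}\le3$, so the numerator stays bounded (indeed by $3$) as $r\to\infty$, uniformly in $\theta$.

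For the denominator I would use $|\ln\zeta|\ge\bigl|\ln|\zeta|\bigr|=\ln|\zeta|$ whenever $|\zeta|\ge1$, together with $\Bigl|\dfrac{1+z^2}{1+z}\Bigr|\ge\dfrac{r^2-1}{r+1}=r-1$; hence $\Bigl|\ln\dfrac{1+z^2}{1+z}\Bigr|\ge\ln(r-1)$ for $r>2$, again independently of $\theta$. Combining the two estimates yields
\[
|zH(z)|\le\frac{3}{\ln(r-1)},
\]
which tends to $0$ as $r\to\infty$, and since this bound does not involve $\theta$, the convergence is uniform in $\theta\in(-\pi,\pi)$, which is exactly the claim.

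The only delicate point — and the step I expect to be the main (though modest) obstacle — is to confirm that the closed form~\eqref{ln-x-frac-eq-complex} is valid with \emph{principal} logarithms in the range of $z$ in question, i.e.\ that neither $\dfrac{z(1+z)}{1+z^2}$ nor $\dfrac{1+z^2}{1+z}$ meets the slit $(-\infty,0]$ when $|z|$ is large. The first quotient tends to $1$ and so is harmless once $r$ is large. For the second, solving $\dfrac{1+z^2}{1+z}=-u$ with $u\ge0$ gives $z^2+uz+(1+u)=0$; for $0\le u<2+2\sqrt2$ the two roots are non-real and satisfy $|z|^2=u+1\le3+2\sqrt2=(1+\sqrt2)^2$, while for $u\ge2+2\sqrt2$ both roots are negative reals lying on the slit, hence outside $\mathcal{A}$. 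Thus the part of the preimage of $(-\infty,0]$ lying in $\mathcal{A}$ is confined to the disc $|z|\le1+\sqrt2$, so for $r>1+\sqrt2$ (in particular for $r\ge3$) the formula~\eqref{ln-x-frac-eq-complex} applies and the estimates above go through. This is the same threshold $1+\sqrt2$ that governs the case distinctions for $\rho$ and $\varrho$ in Theorem~\ref{third-conj-thm}.
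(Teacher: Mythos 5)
Your proof is correct, but it proceeds differently from the paper's. The paper proves Lemma~\ref{lem-zk} by inversion: it uses the identity $H\bigl(\frac1z\bigr)=\frac1{H(z)}$ to rewrite $\lim_{r\to\infty}|zH(z)|^2$ as $\lim_{r\to0^+}\bigl|\frac1{zH(z)}\bigr|^2$, then expands real and imaginary parts in polar coordinates and lets the unbounded term $\ln r\to-\infty$ in the denominator kill the limit, the numerator contributions tending to $\cos^2\theta+\sin^2\theta$. You instead estimate $zH(z)$ directly from~\eqref{ln-x-frac-eq-extension} for large $r$: writing $\frac{z(1+z)}{1+z^2}=1+w$ with $|w|\le\frac1{r-1}$ bounds the numerator $\bigl|z\ln\frac{z(1+z)}{1+z^2}\bigr|$ by an absolute constant, while $\bigl|\ln\frac{1+z^2}{1+z}\bigr|\ge\ln(r-1)$ bounds the denominator from below, giving the explicit $\theta$-free estimate $|zH(z)|\le\frac3{\ln(r-1)}$ for $r\ge3$. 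Your route buys an explicit rate of decay and makes the uniformity in $\theta$ completely transparent, whereas the paper's pointwise polar-coordinate limits leave uniformity implicit; moreover, your verification that the preimage of the slit under $z\mapsto\frac{1+z^2}{1+z}$ within $\mathcal{A}$ lies in the disc $|z|\le1+\sqrt2$ (so the principal-branch formula is legitimate where you use it) addresses a branch issue the paper passes over silently, and it explains the threshold $1+\sqrt2$ appearing in the boundary-value formulas. The paper's inversion argument, on the other hand, recycles the identity $H\bigl(\frac1x\bigr)=\frac1{H(x)}$ that it needs elsewhere and parallels the companion Lemmas~\ref{lem-zk-2} and~\ref{lem-zk-3}, so the three small-$z$/large-$z$ limits are handled by one uniform template. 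Both arguments are sound; yours is the more elementary and quantitative of the two.
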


\begin{proof}
By standard argument, we have
\begin{align*}
\lim_{r\to\infty}|zH(z)|^2&=\lim_{r\to0^+}\biggl|\frac1zH\biggl(\frac1z\biggr)\biggr|^2
=\lim_{r\to0^+}\biggl|\frac1{zH(z)}\biggr|^2 \\
&=\lim_{r\to0^+}\frac {\bigl|\ln\bigl|\frac{1+z^2}{1+z}\bigr| +i\arg\frac{1+z^2}{1+z}\bigr|^2}{|z|^2\bigl|\ln\bigl|\frac{z(1+z)}{1+z^2}\bigr| +i\arg\frac{z(1+z)}{1+z^2}\bigr|^2}\\
&=\lim_{r\to0^+}\frac {\bigl|\ln\bigl|\frac{1+z^2}{1+z}\bigr|\bigr|^2+\bigl|\arg\frac{1+z^2}{1+z}\bigr|^2} {|z|^2\bigl[\bigl|\ln\bigl|\frac{z(1+z)}{1+z^2}\bigr|\bigr|^2 +\bigl|\arg\frac{z(1+z)}{1+z^2}\bigr|^2\bigr]}\\
&=\lim_{r\to0^+}\frac{\bigl[\frac1{2r}\ln\frac{1+2r\cos\theta+r^2}{1+2r^2\cos(2\theta)+r^4}\bigr]^2 +\bigl[\frac1r\arctan\frac{r(1-2 r \cos\theta-r^2)\sin\theta}{1+r(1+r^2) \cos\theta+r^2 \cos (2 \theta)}\bigr]^2} {\bigl[\ln r+\frac12\ln\frac {1+2r\cos\theta+r^2} {1+2r^2\cos(2\theta)+r^4}\bigr]^2 +\bigl|\arg\frac{z(1+z)}{1+z^2}\bigr|^2},
\end{align*}
where
\begin{align*}
\lim_{r\to0^+}\biggl[\frac1{2r}\ln\frac{1+2r\cos\theta+r^2}{1+2r^2\cos(2\theta)+r^4}\biggr] &=\cos\theta,\\
\lim_{r\to0^+}\biggl[\frac1r\arctan\frac{r(1-2 r \cos\theta-r^2)\sin\theta}{1+r(1+r^2) \cos\theta+r^2 \cos (2 \theta)}\biggr]&=\sin\theta,\\
\lim_{r\to0^+}\ln\frac {1+2r\cos\theta+r^2} {1+2r^2\cos(2\theta)+r^4}&=0,
\end{align*}
and $0\le\bigl|\arg\frac{z(1+z)}{1+z^2}\bigr|\le\pi$. Further considering $\lim_{r\to0^+}\ln r=-\infty$, Lemma~\ref{lem-zk} is thus proved.
\end{proof}

\begin{lem}\label{lem-zk-2}
For $z=re^{\theta i}\in\mathcal{A}$ and $\theta\in\bigl[-\frac\pi2,\frac\pi2\bigr]$, the complex function $z^2H(z)$ uniformly tends to $0$ as $r\to0^+$.
\end{lem}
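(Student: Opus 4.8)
The plan is to mimic the proof of Lemma~\ref{lem-zk}. Write $z=re^{\theta i}$ with $\theta\in[-\frac\pi2,\frac\pi2]$ and use the explicit formula~\eqref{ln-x-frac-eq-extension}. For $r$ small the points $1+z$ and $1+z^2$ lie near $1$, so both $\frac{z(1+z)}{1+z^2}$ and $\frac{1+z^2}{1+z}$ stay in $\mathcal A$, the function $H(z)$ is well defined there, and (writing $\ln w=\ln|w|+i\arg w$) it suffices to control the moduli of the numerator and denominator of $H(z)$.

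First I would expand the numerator,
\[
\ln\frac{z(1+z)}{1+z^2}=\ln z+\ln(1+z)-\ln(1+z^2)=\ln z+z+O(r^2),
\]
so that $\bigl|\ln\frac{z(1+z)}{1+z^2}\bigr|=|\ln r|\,(1+o(1))$ uniformly in $\theta$; and the denominator,
\[
\ln\frac{1+z^2}{1+z}=\ln\Bigl(1+\frac{z(z-1)}{1+z}\Bigr)=-z+O(r^2),
\]
so that $\bigl|\ln\frac{1+z^2}{1+z}\bigr|=r\,(1+O(r))$ uniformly in $\theta$. Dividing,
\[
\bigl|z^2H(z)\bigr|=r^2\,\frac{\bigl|\ln\frac{z(1+z)}{1+z^2}\bigr|}{\bigl|\ln\frac{1+z^2}{1+z}\bigr|}=r\,|\ln r|\,(1+o(1)),
\]
which tends to $0$ as $r\to0^+$, uniformly for $\theta\in[-\frac\pi2,\frac\pi2]$; this proves the lemma. (Equivalently one may split into real and imaginary parts in the style of Lemma~\ref{lem-zk}: $\ln\bigl|\frac{1+z^2}{1+z}\bigr|=-r\cos\theta+O(r^2)$ and $\arg\frac{1+z^2}{1+z}=-r\sin\theta+O(r^2)$ recombine, through $\cos^2\theta+\sin^2\theta=1$, to modulus $r(1+O(r))$ with no degeneracy even at $\theta=\pm\frac\pi2$.)

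The step that deserves the most care is the denominator: $z^2H(z)\to0$ holds only because $\ln\frac{1+z^2}{1+z}$ vanishes to order exactly one in $r$, not faster, and this is secured by the factorization $\frac{1+z^2}{1+z}-1=\frac{z(z-1)}{1+z}$, whose modulus equals $r\,|z-1|/|1+z|=r\,(1+O(r))$ precisely because $z-1\to-1$ and $1+z\to1$ stay away from $0$ near $z=0$. One also has to verify that the $O$- and $o$-terms are uniform in $\theta$ over the compact interval, which is automatic since they come from Taylor expanding analytic functions of $z$ at $z=0$. (It would not suffice to combine $H(1/z)=1/H(z)$ with Lemma~\ref{lem-zk}, since that only turns $z^2H(z)$ into $\frac1{w^2H(w)}$ with $w=1/z\to\infty$ and tells us $wH(w)\to\infty$ without a rate; recovering $z^2H(z)\to0$ this way would still demand the sharper asymptotic $H(w)\sim\frac1{w\ln w}$, essentially the same computation.)
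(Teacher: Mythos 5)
Your proposal is correct and follows essentially the paper's own route: a direct asymptotic evaluation of $\bigl|z^2H(z)\bigr|$ from the explicit formula for $H$, showing the denominator $\ln\frac{1+z^2}{1+z}$ vanishes to exactly first order in $r$ while the numerator contributes only $|\ln r|$, so that $\bigl|z^2H(z)\bigr|\sim r|\ln r|\to0$ uniformly. The paper carries out the same computation in polar coordinates (using the limits $\cos\theta$, $\sin\theta$ and $\cos^2\theta+\sin^2\theta=1$ already obtained in the proof of Lemma~\ref{lem-zk}), whereas you use Taylor expansions at $z=0$, which is only a difference in bookkeeping.
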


\begin{proof}
By the same argument as in the proof of Lemma~\ref{lem-zk}, we have
\begin{multline*}
\lim_{r\to0^+}\bigl|z^2H(z)\bigr|^2=\lim_{r\to0^+}\Biggl|z^2\frac{\ln \bigl|\frac{z(1+z)}{1+z^2}\bigr| +i\arg\frac{z(1+z)}{1+z^2}}{\ln\bigl|\frac{1+z^2}{1+z}\bigr|+i\arg\frac{1+z^2}{1+z}}\Biggr|^2\\
=\lim_{r\to0^+}\frac{\bigl[r\ln r+\frac{r}2\ln\frac {1+2r\cos\theta+r^2} {1+2r^2\cos(2\theta)+r^4}\bigr]^2 +r^2\bigl|\arg\frac{z(1+z)}{1+z^2}\bigr|^2} {\bigl[\frac1{2r}\ln\frac{1+2r\cos\theta+r^2}{1+2r^2\cos(2\theta)+r^4}\bigr]^2 +\bigl[\frac1r\arctan\frac{r(1-2 r \cos\theta-r^2)\sin\theta}{1+r(1+r^2) \cos\theta+r^2 \cos (2 \theta)}\bigr]^2}=0.
\end{multline*}
The proof of Lemma~\ref{lem-zk-2} is complete.
\end{proof}

\begin{lem}\label{lem-zk-3}
For $z=re^{\theta i}\in\mathcal{A}$, the limits
\begin{equation}
\lim_{z\to0}\frac1{zH(z)}=0
\end{equation}
for $\theta\in\bigl[-\frac\pi2,\frac\pi2\bigr]$ and
\begin{equation}
\lim_{z\to\infty}\frac1{z^2H(z)}=0
\end{equation}
for $\theta\in(-\pi,\pi)$ converge uniformly.
\end{lem}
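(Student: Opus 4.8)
The plan is to deduce both limits from the already-established Lemmas~\ref{lem-zk} and~\ref{lem-zk-2} by means of the reciprocal relation $H\bigl(\frac1z\bigr)=\frac1{H(z)}$. On $\mathcal{A}$ this is immediate from~\eqref{ln-x-frac-eq-extension}: replacing $z$ by $\frac1z$ and simplifying $\frac{(1/z)(1+1/z)}{1+1/z^2}=\frac{1+z}{1+z^2}$ and $\frac{1+1/z^2}{1+1/z}=\frac{1+z^2}{z(1+z)}$, and then using $\ln\frac1w=-\ln w$ on the cut plane, interchanges the numerator and the denominator in~\eqref{ln-x-frac-eq-extension}. The substitution $w=\frac1z$ maps $\mathcal{A}$ onto itself, carries $\arg z=\theta$ to $\arg w=-\theta$, sends $z\to0$ to $w\to\infty$ and $z\to\infty$ to $w\to0$, and leaves the sector $|\arg z|\le\frac\pi2$ invariant.

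For the first limit, write $\frac1{zH(z)}=\frac1z\cdot\frac1{H(z)}=\frac1z H\bigl(\frac1z\bigr)=wH(w)$ with $w=\frac1z$. As $z\to0$ with $\theta\in\bigl[-\frac\pi2,\frac\pi2\bigr]$ one has $w\to\infty$ with $\arg w\in\bigl[-\frac\pi2,\frac\pi2\bigr]\subset(-\pi,\pi)$, so Lemma~\ref{lem-zk} — which provides the uniform decay of $wH(w)$ as $|w|\to\infty$ over the \emph{whole} cut plane — yields that $\frac1{zH(z)}\to0$ uniformly. (In particular the restriction $\theta\in\bigl[-\frac\pi2,\frac\pi2\bigr]$ is not actually needed for this part.)

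For the second limit, the same algebra gives $\frac1{z^2H(z)}=\frac1{z^2}H\bigl(\frac1z\bigr)=w^2H(w)$ with $w=\frac1z$; as $z\to\infty$ with $\theta\in(-\pi,\pi)$ we get $w\to0$ with $\arg w\in(-\pi,\pi)$. Since Lemma~\ref{lem-zk-2} as stated only covers $|\arg w|\le\frac\pi2$, the two remaining sectors $\frac\pi2<|\arg w|<\pi$ have to be treated separately. I would either note that the computation in the proof of Lemma~\ref{lem-zk-2} goes through verbatim for every $\theta\in(-\pi,\pi)$ (nothing there used $\theta\in\bigl[-\frac\pi2,\frac\pi2\bigr]$: the denominator still tends to $\cos^2\theta+\sin^2\theta=1$ and the numerator still tends to $0$), or give a direct estimate: from~\eqref{ln-x-frac-eq-extension}, as $w\to0$,
\begin{equation*}
\ln\frac{w(1+w)}{1+w^2}=\ln w+O(w),\qquad \ln\frac{1+w^2}{1+w}=-w+O\bigl(w^2\bigr),
\end{equation*}
so that $w^2H(w)=-w\ln w\,(1+o(1))$ and $|w\ln w|\le|w|\bigl(\bigl|\ln|w|\bigr|+\pi\bigr)\to0$, the bound being independent of $\arg w$. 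Either way $\frac1{z^2H(z)}\to0$ uniformly for $\theta\in(-\pi,\pi)$.

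The only genuine obstacle is the bookkeeping about angular ranges and uniformity: one must check that the $O$-error terms in the expansions near $w=0$ (equivalently near $z=\infty$) are bounded uniformly in the argument. This is harmless, because the sole argument-dependent quantity appearing in the estimates is $\arg w$ (or $\arg z$), which is uniformly bounded by $\pi$ on $\mathcal{A}$, while everything else — the expansions of $\frac{1+w}{1+w^2}$, $\frac{1+w^2}{1+w}$ and of $\ln(1+w)$, $\ln(1+w^2)$ about $w=0$ — depends only on $|w|$. I would therefore write all the final estimates in terms of $|w|$ and $\bigl|\ln|w|\bigr|+\pi$.
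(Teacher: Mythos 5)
Your proposal is correct and is essentially the paper's own argument: the paper proves this lemma only by remarking that it follows from ``similar arguments'' to Lemmas~\ref{lem-zk} and~\ref{lem-zk-2}, whose proofs already rest on the substitution $w=\frac1z$ and the identity $H\bigl(\frac1z\bigr)=\frac1{H(z)}$ that you invoke; in fact the first limit is exactly the quantity $\lim_{r\to0^+}\bigl|\frac1{zH(z)}\bigr|^2=0$ computed inside the proof of Lemma~\ref{lem-zk}. Your additional check that the computation of Lemma~\ref{lem-zk-2} (or a direct expansion near $w=0$) is uniform over all $\theta\in(-\pi,\pi)$, not just $\bigl[-\frac\pi2,\frac\pi2\bigr]$, is precisely the bookkeeping the paper leaves implicit, and you handle it correctly.
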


\begin{proof}
This follows from similar arguments as in proofs of Lemmas~\ref{lem-zk} and~\ref{lem-zk-2}.
\end{proof}

\begin{lem}\label{lem-h-Im}
For $z\in\mathcal{A}$, the real and imaginary parts of the principal branch of the complex function $zH(z)$ satisfy
\begin{equation*}
\lim_{\varepsilon\to0^+}\Re[(-t+\varepsilon i)H(-t+\varepsilon i)]
=
\begin{cases}
\dfrac{t\ln\frac{1+t^2}{t(1-t)}}{\ln\frac{1+t^2}{1-t}},&0<t<1;\\
t, & t=1;\\
\dfrac{t\bigl[\ln\frac{1+t^2}{t-1}\ln\frac{1+t^2}{t(t-1)}+2\pi^2\bigr]} {\bigl(\ln\frac{1+t^2}{t-1}\bigr)^2+\pi^2}, &1<t<1+\sqrt2\,;\\
\dfrac{t\ln\frac{1+t^2}{t-1}\ln\frac{1+t^2}{t(t-1)}}{\bigl(\ln\frac{1+t^2}{t-1}\bigr)^2+\pi^2}, &1+\sqrt2\,\le t<\infty
\end{cases}
\end{equation*}
and
\begin{equation}\label{lem-zH(z)-Im-eq}
\lim_{\varepsilon\to0^+}\Im[(-t+\varepsilon i)H(-t+\varepsilon i)]
=
\begin{cases}
-\dfrac{\pi t}{\ln\frac{1+t^2}{1-t}},&0<t<1;\\
0, & t=1;\\
-\dfrac{\pi t\ln\frac{t(1+t^2)}{t-1}} {\bigl(\ln\frac{1+t^2}{t-1}\bigr)^2+\pi^2}, &1<t<1+\sqrt2\,;\\
-\dfrac{\pi t\ln\frac{1+t^2}{t(t-1)}}{\bigl(\ln\frac{1+t^2}{t-1}\bigr)^2+\pi^2}, &1+\sqrt2\,\le t<\infty.
\end{cases}
\end{equation}
\end{lem}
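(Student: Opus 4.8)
The plan is to prove the lemma by a direct computation of the boundary values: fix $t>0$, put $z=-t+\varepsilon i$ with $\varepsilon>0$, substitute this into an explicit expression for $zH(z)$, and let $\varepsilon\to0^{+}$, keeping careful track of the principal arguments of the logarithmic factors. By~\eqref{ln-x-frac-eq-complex} together with the relation $H=h-1$ from~\eqref{H-h-relation-eq}, one has, for every $z\in\mathcal{A}$ with $z\ne1$,
\begin{equation*}
zH(z)=zh(z)-z=\frac{z\ln z}{\ln\frac{1+z^{2}}{1+z}}-z,
\end{equation*}
and, since $z=-t+\varepsilon i\in\mathcal{A}$ and $z\ne1$ for every $\varepsilon>0$, this representation is available along the whole approach $\varepsilon\to0^{+}$. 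It therefore suffices to evaluate $\lim_{\varepsilon\to0^{+}}z\ln z$, $\lim_{\varepsilon\to0^{+}}z$, and $\lim_{\varepsilon\to0^{+}}\ln\frac{1+z^{2}}{1+z}$ and then to combine them.

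The first two limits are immediate and independent of the case: as $\varepsilon\to0^{+}$ the point $z=-t+\varepsilon i$ lies in the second quadrant and tends to $-t<0$, so $|z|\to t$ and $\arg z\to\pi$, whence $\ln z\to\ln t+i\pi$, $z\ln z\to-t\ln t-i\pi t$, and $-z\to t$. For the third limit I would write $1+z^{2}=(1+t^{2}-\varepsilon^{2})-2t\varepsilon i$ and $1+z=(1-t)+\varepsilon i$ and distinguish three cases according to the sign of $1-t$. If $0<t<1$, then $1+z\to1-t>0$, hence $\frac{1+z^{2}}{1+z}\to\frac{1+t^{2}}{1-t}>0$ and $\ln\frac{1+z^{2}}{1+z}\to\ln\frac{1+t^{2}}{1-t}$. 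If $t=1$, then $1+z=\varepsilon i\to0$ while $1+z^{2}\to2$, so $\bigl|\frac{1+z^{2}}{1+z}\bigr|\to\infty$ and $\bigl|\ln\frac{1+z^{2}}{1+z}\bigr|\to\infty$; since $z\ln z\to-i\pi$ is bounded, $\frac{z\ln z}{\ln\frac{1+z^{2}}{1+z}}\to0$ and hence $zH(z)\to-z\to1$, which is the middle alternative. If $t>1$, then $\frac{1+z^{2}}{1+z}\to\frac{1+t^{2}}{1-t}<0$, a negative real number of modulus $\frac{1+t^{2}}{t-1}$, and the only remaining point is to decide from which half-plane this limit is approached. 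A short computation gives
\begin{equation*}
\Im\frac{1+z^{2}}{1+z}=\frac{-\varepsilon\bigl(1+2t-t^{2}-\varepsilon^{2}\bigr)}{(1-t)^{2}+\varepsilon^{2}}.
\end{equation*}
For $1<t<1+\sqrt2$ one has $1+2t-t^{2}>0$, so this quantity is negative when $\varepsilon$ is small, hence $\arg\frac{1+z^{2}}{1+z}\to-\pi$ and $\ln\frac{1+z^{2}}{1+z}\to\ln\frac{1+t^{2}}{t-1}-i\pi$; for $t\ge1+\sqrt2$ one has $1+2t-t^{2}\le0$, so $1+2t-t^{2}-\varepsilon^{2}<0$ for every $\varepsilon>0$ (in particular at $t=1+\sqrt2$, where it equals $-\varepsilon^{2}$), hence the imaginary part is positive, $\arg\frac{1+z^{2}}{1+z}\to\pi$, and $\ln\frac{1+z^{2}}{1+z}\to\ln\frac{1+t^{2}}{t-1}+i\pi$.

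It then remains to substitute these limits into $zH(z)=\frac{z\ln z}{\ln\frac{1+z^{2}}{1+z}}-z$, to rationalize the denominator in the two subcases with $t>1$ by multiplying numerator and denominator by the conjugate of the limiting denominator, and to separate real and imaginary parts; the claimed formulas then drop out after the elementary rewritings $\ln t+\ln\frac{1+t^{2}}{t-1}=\ln\frac{t(1+t^{2})}{t-1}$, $\ln\frac{1+t^{2}}{t-1}-\ln t=\ln\frac{1+t^{2}}{t(t-1)}$, and $\ln\frac{1+t^{2}}{1-t}-\ln t=\ln\frac{1+t^{2}}{t(1-t)}$. I expect the only genuinely delicate step to be the branch determination when $t>1$, namely establishing the displayed sign of $\Im\frac{1+z^{2}}{1+z}$ and handling correctly the borderline value $t=1+\sqrt2$, where the leading $\varepsilon$-term of $1+2t-t^{2}-\varepsilon^{2}$ vanishes and the sign is decided by the remaining $-\varepsilon^{2}$; everything else is routine algebra. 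This sign change at $t=1+\sqrt2$ is precisely what separates the third and fourth alternatives in the lemma, and it is also the origin of the breakpoint $1+\sqrt2$ in the definition of $\rho$.
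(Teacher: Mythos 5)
Your proposal is correct and follows essentially the same route as the paper: it computes the boundary values of the principal branch as $z=-t+\varepsilon i$, $\varepsilon\to0^+$, with the same case analysis driven by the sign of $1+2t-t^{2}-\varepsilon^{2}$ (hence the breakpoints $t=1$ and $t=1+\sqrt2$), the only cosmetic difference being that the paper first records $\lim\Re h$ and $\lim\Im h$ and then assembles $zH(z)$, while you work with $zH(z)=\frac{z\ln z}{\ln\frac{1+z^{2}}{1+z}}-z$ directly; the final rationalizations you leave as routine do check out against the stated formulas.
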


\begin{proof}
For $t\in(0,\infty)$ and $\varepsilon>0$, when $z=-t+\varepsilon i$, we have
\begin{align*}
\ln\frac{1+z^2}{1+z}&=\ln\frac{1+(-t+\varepsilon i)^2}{1-t+\varepsilon i}\\
&=\ln\frac{\bigl[1+(-t+\varepsilon i)^2\bigr](1-t-\varepsilon i)}{(1-t)^2+\varepsilon^2}\\
&=\ln\biggl|\frac{(1-t)\bigl(t^2+1\bigr)-\varepsilon^2 (t+1)+i \varepsilon\bigl(\varepsilon^2+t^2-2 t-1\bigr)}{(1-t)^2+\varepsilon^2}\biggr|\\
&\quad+i\arg\frac{(1-t)\bigl(t^2+1\bigr)-\varepsilon^2 (t+1)+i \varepsilon\bigl(\varepsilon^2+t^2-2 t-1\bigr)}{(1-t)^2+\varepsilon^2}\\
&\to
\begin{cases}
\ln\dfrac{1+t^2}{1-t},&0<t<1\\
\infty-\dfrac\pi2i, & t=1\\
\ln\dfrac{1+t^2}{t-1}-\pi i,&1<t<1+\sqrt2\,\\
\ln\dfrac{1+t^2}{t-1}+\pi i,&1+\sqrt2\,\le t<\infty
\end{cases}
\end{align*}
as $\varepsilon\to0^+$. So, it follows that
\begin{align*}
h(-t+\varepsilon i)
&=\frac{\ln(-t+\varepsilon i)}{\ln\frac{1+(-t+\varepsilon i)^2}{1-t+\varepsilon i}}\\
&\to
\begin{cases}
\dfrac{\ln t+\pi i}{\ln\frac{1+t^2}{1-t}},&0<t<1\\
0, & t=1\\
\dfrac{\ln t+\pi i}{\ln\frac{1+t^2}{t-1}-\pi i},&1<t<1+\sqrt2\,\\
\dfrac{\ln t+\pi i}{\ln\frac{1+t^2}{t-1}+\pi i},&1+\sqrt2\,\le t<\infty
\end{cases}\\
&=
\begin{cases}
\dfrac{\ln t}{\ln\frac{1+t^2}{1-t}}+\dfrac{\pi}{\ln\frac{1+t^2}{1-t}} i,&0<t<1\\
0, & t=1\\
\dfrac{\ln t\ln\frac{1+t^2}{t-1}-\pi^2}{\bigl(\ln\frac{1+t^2}{t-1}\bigr)^2+\pi^2} +\dfrac{\pi\bigl(\ln\frac{1+t^2}{t-1}+\ln t\bigr)} {\bigl(\ln\frac{1+t^2}{t-1}\bigr)^2+\pi^2}i,&1<t<1+\sqrt2\,\\
\dfrac{\ln t\ln\frac{1+t^2}{t-1}+\pi^2}{\bigl(\ln\frac{1+t^2}{t-1}\bigr)^2+\pi^2} +\dfrac{\pi\bigl(\ln\frac{1+t^2}{t-1}-\ln t\bigr)}{\bigl(\ln\frac{1+t^2}{t-1}\bigr)^2+\pi^2}i, &1+\sqrt2\,\le t<\infty
\end{cases}
\end{align*}
as $\varepsilon\to0^+$. As a result,
\begin{equation}\label{lem-h-Re-eq}
\lim_{\varepsilon\to0^+}\Re h(-t+\varepsilon i)
=
\begin{cases}
\dfrac{\ln t}{\ln\frac{1+t^2}{1-t}},&0<t<1;\\
0, & t=1;\\
\dfrac{\ln t\ln\frac{1+t^2}{t-1}-\pi^2}{\bigl(\ln\frac{1+t^2}{t-1}\bigr)^2+\pi^2}, &1<t<1+\sqrt2\,;\\
\dfrac{\ln t\ln\frac{1+t^2}{t-1}+\pi^2}{\bigl(\ln\frac{1+t^2}{t-1}\bigr)^2+\pi^2}, &1+\sqrt2\,\le t<\infty
\end{cases}
\end{equation}
and
\begin{equation}\label{lem-h-Im-eq}
\lim_{\varepsilon\to0^+}\Im h(-t+\varepsilon i)
=
\begin{cases}
\dfrac{\pi}{\ln\frac{1+t^2}{1-t}},&0<t<1;\\
0, & t=1;\\
\dfrac{\pi\bigl(\ln\frac{1+t^2}{t-1}+\ln t\bigr)} {\bigl(\ln\frac{1+t^2}{t-1}\bigr)^2+\pi^2}, &1<t<1+\sqrt2\,;\\
\dfrac{\pi\bigl(\ln\frac{1+t^2}{t-1}-\ln t\bigr)}{\bigl(\ln\frac{1+t^2}{t-1}\bigr)^2+\pi^2}, &1+\sqrt2\,\le t<\infty.
\end{cases}
\end{equation}
\par
From the relation~\eqref{H-h-relation-eq} between $h(x)$ and $H(x)$ and the property of complex numbers, it follows that
\begin{equation*}
\Re[zH(z)]=\Re[zh(z)-z]=\Re z\Re h(z)-\Im z\Im h(z)-\Re z
\end{equation*}
and
\begin{equation*}
\Im[zH(z)]=\Im[zh(z)-z]=\Im z\Re h(z)+\Re z\Im h(z)-\Im z.
\end{equation*}
Accordingly, we obtain
\begin{equation*}
\lim_{\varepsilon\to0^+}\Re[(-t+\varepsilon i)H(-t+\varepsilon i)]=-t\lim_{\varepsilon\to0^+}\Re h(z)+t =t\Bigl[1-\lim_{\varepsilon\to0^+}\Re h(z)\Bigr]
\end{equation*}
and
\begin{equation*}
\lim_{\varepsilon\to0^+}\Im[(-t+\varepsilon i)H(-t+\varepsilon i)]=-t\lim_{\varepsilon\to0^+}\Im h(z).
\end{equation*}
Combining these with the limits~\eqref{lem-h-Re-eq} and~\eqref{lem-h-Im-eq} and simplifying yield the required limits. Lemma~\ref{lem-h-Im} is thus proved.
\end{proof}

\begin{lem}[{Convolution theorem for Laplace transforms~\cite[pp.~91\nobreakdash--92]{Widder-Laplace-Transform-41}}]\label{convlotion}
Let $f_i(t)$ for $i=1,2$ be piecewise continuous in arbitrary finite intervals included in $(0,\infty)$. If there exist some constants $M_i>0$ and $c_i\ge0$ such that $|f_i(t)|\le M_ie^{c_it}$ for $i=1,2$, then
\begin{equation}
\int_0^\infty\bigg[\int_0^tf_1(u)f_2(t-u)\td u\bigg]e^{-st}\td t =\int_0^\infty
f_1(u)e^{-su}\td u\int_0^\infty f_2(v)e^{-sv}\td v.
\end{equation}
\end{lem}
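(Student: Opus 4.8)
The plan is to derive this classical convolution theorem by recognizing both sides as two different evaluations of one and the same absolutely convergent double integral over the open first quadrant $Q=(0,\infty)\times(0,\infty)$, the passage between them being furnished by Fubini's theorem and a volume-preserving linear change of variables. Throughout, fix a real number $s>\max\{c_1,c_2\}$; this is the natural range in which all the integrals below converge (for complex $s$ one replaces $s$ by $\Re s$ in every estimate).

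First I would record the pointwise majorization
\begin{equation*}
\bigl|f_1(u)f_2(v)e^{-s(u+v)}\bigr|\le M_1M_2e^{-(s-c_1)u}e^{-(s-c_2)v},\qquad u,v>0,
\end{equation*}
whose right-hand side is the product of two functions each integrable on $(0,\infty)$. Since piecewise continuity on finite intervals makes $(u,v)\mapsto f_1(u)f_2(v)e^{-s(u+v)}$ measurable on $Q$, this bound shows it is absolutely integrable there, so by the Tonelli--Fubini theorem
\begin{equation*}
\iint_Q f_1(u)f_2(v)e^{-s(u+v)}\,\td u\,\td v=\int_0^\infty f_1(u)e^{-su}\td u\int_0^\infty f_2(v)e^{-sv}\td v,
\end{equation*}
which is exactly the right-hand side of the asserted identity (and in particular both one-variable Laplace integrals converge).

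Next I would apply the substitution $(u,v)\mapsto(u,t)$ with $t=u+v$, a bijection of $Q$ onto $\{(u,t):0<u<t<\infty\}$ of Jacobian $1$. Because the integrand is absolutely integrable, this turns the double integral into
\begin{equation*}
\int_0^\infty\biggl[\int_0^t f_1(u)f_2(t-u)\td u\biggr]e^{-st}\td t,
\end{equation*}
the inner integral being finite for each $t>0$ since $f_1$ and $f_2$ are piecewise continuous on the finite interval $(0,t)$. Equating the two computations of $\iint_Q f_1(u)f_2(v)e^{-s(u+v)}\,\td u\,\td v$ yields the formula.

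The one step genuinely needing care, and the crux of the argument, is the absolute integrability of $f_1(u)f_2(v)e^{-s(u+v)}$ over $Q$: it is precisely this fact that simultaneously licenses the use of Fubini's theorem (to identify the product of the two transforms) and the change of variables (to identify the transform of the convolution), and it rests entirely on the exponential growth hypotheses $|f_i(t)|\le M_ie^{c_it}$. It is also worth noting that the convolution $t\mapsto\int_0^t f_1(u)f_2(t-u)\td u$ is itself piecewise continuous on finite intervals and obeys a bound of the form $M(1+t)e^{\max\{c_1,c_2\}t}$, so its own Laplace integral converges for $s>\max\{c_1,c_2\}$ and the left-hand side of the identity is well defined.
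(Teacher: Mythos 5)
Your argument is correct. Note, however, that the paper itself gives no proof of this lemma: it is quoted verbatim from Widder's book (pp.~91--92), so there is nothing internal to compare against. Your Tonelli--Fubini argument is the standard modern proof of that cited result: the majorization $|f_1(u)f_2(v)e^{-s(u+v)}|\le M_1M_2e^{-(s-c_1)u}e^{-(s-c_2)v}$ gives absolute integrability on the quadrant, and the unit-Jacobian substitution $t=u+v$ converts the product of transforms into the transform of the convolution. Widder's own treatment works in the improper Riemann integral setting and proceeds through estimates on partial integrals rather than invoking Lebesgue's Fubini theorem, but the underlying idea is the same. One genuinely useful point you add is the explicit restriction $\Re s>\max\{c_1,c_2\}$ (guaranteeing absolute convergence of both transforms), which the statement of the lemma leaves implicit even though it is exactly the regime in which the identity is applied later in the paper, where the relevant densities are nonnegative and the transforms converge for $\Re s>0$; your closing remark that the convolution itself satisfies a bound of the form $M(1+t)e^{\max\{c_1,c_2\}t}$, so that the left-hand side is well defined, is also a worthwhile completeness check.
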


\section{Proofs of theorems}

We now start out to to prove our main results stated in Theorems~\ref{third-conj-thm} and~\ref{third-conj-thm-conseq}.

\begin{proof}[Proof of Theorem~\ref{third-conj-thm}]
For any fixed point $z_0=x_0+iy_0\in\mathbb{C}\setminus(-\infty,0]$, choose $\varepsilon$ and $r$ such that \begin{equation*}
\begin{cases}
0<\varepsilon<|y_0|\le|z_0|<r, & y_0\ne0,\\
0<\varepsilon<x_0=|z_0|<r, & y_0=0,\\
\end{cases}
\end{equation*}
and consider the positively oriented contour $C(\varepsilon,r)$ in $\mathbb{C}\setminus(-\infty,0]$ consisting of the half circle $z=\varepsilon e^{i\theta}$ for $\theta\in\bigl[-\frac\pi2,\frac\pi2\bigr]$ and the half lines $z=x\pm i\varepsilon$ for $x\le0$ until they cut the circle $|z|=r$, which close the contour at the points $-r(\varepsilon)\pm i\varepsilon$, where $0<r(\varepsilon)\to r$ as $\varepsilon\to0$. See Figure~\ref{note-on-li-chen-conj.eps}.
\begin{figure}[hbtp]
 \includegraphics[width=0.7\textwidth]{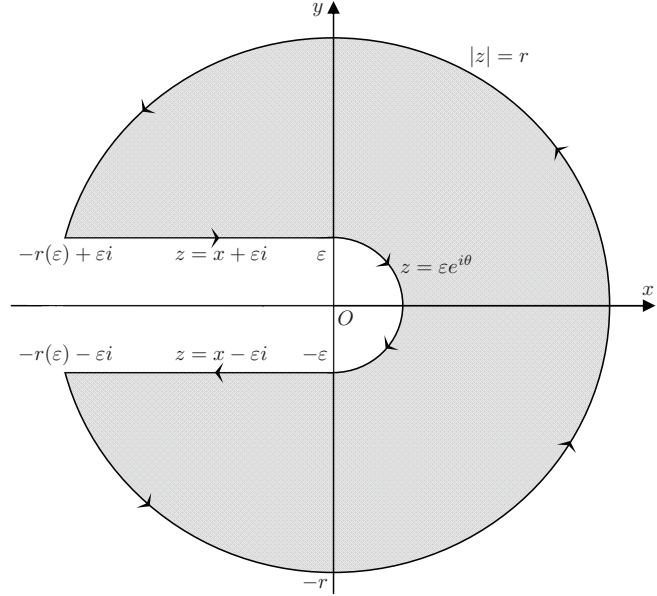}\\
 \caption{The contour $C(\varepsilon,r)$}\label{note-on-li-chen-conj.eps}
\end{figure}
\par
For our own convenience, in what follows, let us denote $zH(z)$ by $G(z)$.
\par
Applying Cauchy integral formula in Lemma~\ref{cauchy-formula} to the function $G(z)$ on the domain enclosed by the contour $C(\varepsilon,r)$ gives
\begin{multline}\label{g-Cauchy-Apply}
G(z_0)=\frac1{2\pi i}\biggl[\int_{\pi/2}^{-\pi/2}\frac{\varepsilon i e^{i\theta}G\bigl(\varepsilon e^{i\theta}\bigr)}{\varepsilon e^{i\theta}-z_0}\td\theta +\int_{\arg[-r(\varepsilon)-\varepsilon i]}^{\arg[-r(\varepsilon)+\varepsilon i]}\frac{ir e^{i\theta}G\bigl(re^{i\theta}\bigr)}{re^{i\theta}-z_0}\td\theta\\
+\int_{-r(\varepsilon)}^0 \frac{G(t+\varepsilon i)}{t+\varepsilon i-z_0}\td t +\int_0^{-r(\varepsilon)}\frac{G(t-\varepsilon i)}{t-\varepsilon i-z_0}\td t\biggr].
\end{multline}
\par
By virtue of Lemma~\ref{lem-zk}, we deduce that
\begin{equation}\label{big-circle-int=0}
\lim_{\substack{\varepsilon\to0^+\\r\to\infty}} \int_{\arg[-r(\varepsilon)-\varepsilon i]}^{\arg[-r(\varepsilon)+\varepsilon i]}\frac{ir e^{i\theta}G\bigl(re^{i\theta}\bigr)}{re^{i\theta}-z_0}\td\theta
=\lim_{r\to\infty}\int_{-\pi}^{\pi}\frac{ir e^{i\theta}G\bigl(re^{i\theta}\bigr)}{re^{i\theta}-z_0}\td\theta
=0.
\end{equation}
From Lemma~\ref{lem-zk-2}, we see that $\lim_{z\to0^+}[zG(z)]=0$. Hence, we have
\begin{equation}\label{z_0f(z_0)=0}
\lim_{\varepsilon\to0^+}\int_{\pi/2}^{-\pi/2}\frac{\varepsilon i e^{i\theta}G\bigl(\varepsilon e^{i\theta}\bigr)}{\varepsilon e^{i\theta}-z_0}\td\theta=0.
\end{equation}
Utilizing the readily verified formula $G(\overline{z})=\overline{G(z)}$ and the limit~\eqref{lem-zH(z)-Im-eq} in Lemma~\ref{lem-h-Im} results in
\begin{align*}
&\quad\int_{-r(\varepsilon)}^0 \frac{G(t+\varepsilon i)}{t+\varepsilon i-z_0}\td t
+\int_0^{-r(\varepsilon)}\frac{G(t-\varepsilon i)}{t-\varepsilon i-z_0}\td t \\
&=\int_{-r(\varepsilon)}^0 \biggl[\frac{G(t+\varepsilon i)}{t+\varepsilon i-z_0}-\frac{G(t-\varepsilon i)}{t-\varepsilon i-z_0}\biggr]\td t\\
&=\int_{-r(\varepsilon)}^0\frac{(t-\varepsilon i-z_0)G(t+\varepsilon i) -(t+\varepsilon i-z_0)G(t-\varepsilon i)} {(t+\varepsilon i-z_0)(t-\varepsilon i-z_0)}\td t\\
&=\int_{-r(\varepsilon)}^0\frac{(t-z_0)[G(t+\varepsilon i)-G(t-\varepsilon i)] -\varepsilon i[G(t-\varepsilon i)+G(t+\varepsilon i)]} {(t+\varepsilon i-z_0)(t-\varepsilon i-z_0)}\td t\\
&=2i\int_{-r(\varepsilon)}^0\frac{(t-z_0)\Im G(t+\varepsilon i) -\varepsilon\Re G(t+\varepsilon i)} {(t+\varepsilon i-z_0)(t-\varepsilon i-z_0)}\td t\\
&\to2i\int_{-r}^0\frac{\lim_{\varepsilon\to0^+}\Im G(t+\varepsilon i)}{t-z_0}\td t\\
&=-2i\int^r_0\frac{\lim_{\varepsilon\to0^+}\Im G(-t+\varepsilon i)}{t+z_0}\td t\\
&\to-2i\int^\infty_0\frac{\lim_{\varepsilon\to0^+}\Im G(-t+\varepsilon i)}{t+z_0}\td t\\
&=2\pi i\int_{0}^{\infty}\frac{\rho(t)}{t+z_0}\td t
\end{align*}
as $\varepsilon\to0^+$ and $r\to\infty$.
Substituting~\eqref{big-circle-int=0}, \eqref{z_0f(z_0)=0}, and the above one into~\eqref{g-Cauchy-Apply} and simplifying generate the integral representation~\eqref{zH(z)-int-exp}.
\par
It is standard to show that the functions $\frac{t(1+t^2)}{t-1}$ on $\bigl(1,1+\sqrt2\,\bigr)$ and $\frac{1+t^2}{t(t-1)}$ on $\bigl[1+\sqrt2\,,\infty\bigr)$ are bigger than $1$. This implies that the function $\rho(t)$ is non-negative on $(0,\infty)$.
\par
By standard argument and Lemma~\ref{lem-h-Im}, we may gain
\begin{align*}
&\quad\lim_{\varepsilon \to0^+}\Im\frac1{(-t+\varepsilon i)^2H(-t+\varepsilon i)}\\
&=\lim_{\varepsilon \to0^+}\Im\frac{-(\varepsilon \Im G(-t+\varepsilon i) + t\Re G(-t+\varepsilon i)) +i(t\Im G(-t+\varepsilon i)-\varepsilon \Re G(-t+\varepsilon i))} {(\varepsilon^2+t^2)|G(-t+\varepsilon i)|^2}\\
&=\lim_{\varepsilon \to0^+}\frac{t\Im G(-t+\varepsilon i)-\varepsilon \Re G(-t+\varepsilon i)} {(\varepsilon^2+t^2)|G(-t+\varepsilon i)|^2}\\
&=\frac{\lim_{\varepsilon \to0^+}\Im G(-t+\varepsilon i)} {t\lim_{\varepsilon \to0^+}|G(-t+\varepsilon i)|^2}
\end{align*}
and
\begin{multline*}
\lim_{\varepsilon \to0^+}|G(-t+\varepsilon i)|^2=\lim_{\varepsilon \to0^+}[\Re G(-t+\varepsilon i)]^2 +\lim_{\varepsilon \to0^+}[\Im G(-t+\varepsilon i)]^2\\
=
\begin{cases}
\dfrac{t^2\bigl\{\bigl[\ln\frac{1+t^2}{t(1-t)}\bigr]^2+\pi^2\bigr\}} {\bigl(\ln\frac{1+t^2}{1-t}\bigr)^2},&0<t<1;\\
t^2, & t=1;\\
\dfrac{t^2\bigl\{\bigl[\ln\frac{1+t^2}{t-1}\ln\frac{1+t^2}{t(t-1)}+2\pi^2\bigr]^2 +\pi^2\bigl[\ln\frac{t(1+t^2)}{t-1}\bigr]^2\bigr\}} {\bigl[\bigl(\ln\frac{1+t^2}{t-1}\bigr)^2+\pi^2\bigr]^2}, &1<t<1+\sqrt2\,;\\
\dfrac{t^2\bigl[\ln\frac{1+t^2}{t(t-1)}\bigr]^2} {\bigl(\ln\frac{1+t^2}{t-1}\bigr)^2+\pi^2}, &1+\sqrt2\,\le t<\infty.
\end{cases}
\end{multline*}
By Lemma~\ref{lem-zk-3} and the same argument as in the proof of the integral representation~\eqref{zH(z)-int-exp}, it follows that
\begin{align*}
\frac1{z^2H(z)}&=-\frac1\pi\int^\infty_0\frac{1}{t+z} \lim_{\varepsilon \to0^+}\Im\frac1{(-t+\varepsilon i)^2H(-t+\varepsilon i)}\td t\\
&=-\frac1\pi\int^\infty_0\frac{1}{t+z} \frac{\lim_{\varepsilon \to0^+}\Im G(-t+\varepsilon i)} {t\lim_{\varepsilon \to0^+}|G(-t+\varepsilon i)|^2}\td t\\
&=\int^\infty_0\frac{1}{t+z} \frac{\rho(t)} {\varrho(t)}\td t.
\end{align*}
The integral representation~\eqref{frac1z2H(z)-int} is proved.
\par
By Lemma~\ref{convlotion} and the integral representation~\eqref{zH(z)-int-exp}, it is not difficult to see that
\begin{align*}
h(z)&=1+\frac{zH(z)}{z}\\
&=1+\frac1z\int_0^\infty\frac{\rho(t)}{t+z}\td t\\
&=1+\int_0^\infty e^{-zt}\td t\int_0^\infty\int_0^\infty\rho(t)e^{-u(t+z)}\td u\td t\\
&=1+\int_0^\infty e^{-zt}\td t\int_0^\infty\biggl[\int_0^\infty\rho(u)e^{-tu}\td u\biggr]e^{-tz}\td t\\
&=1+\int_0^\infty\biggl[\int_0^t\int_0^\infty\rho(u)e^{-vu}\td u\td v\biggr]e^{-tz}\td t\\
&=1+\int_0^\infty\biggl[\int_0^\infty\int_0^t\rho(u)e^{-vu}\td v\td u\biggr]e^{-tz}\td t\\
&=1+\int_0^\infty\biggl[\int_0^\infty\frac{1-e^{-t u}}{u}\rho(u)\td u\biggr]e^{-tz}\td t.
\end{align*}
The integral representation~\eqref{h(z)-int-representation} is proved.
\par
The integral representation~\eqref{frac1z2H(z)-int} may be rearranged as
\begin{align*}
\frac1{H(z)}&=\int_0^\infty\frac{\rho(t)}{\varrho(t)}\frac{z^2}{t+z}\td t\\
&=\int_0^\infty\frac{\rho(t)}{\varrho(t)}\biggl(\frac{t^2}{t+z}+z-t\biggr)\td t\\
&=\int_0^\infty\int_0^\infty \frac{t^2\rho(t)}{\varrho(t)}e^{-(t+z)u}\td u\td t+ z\int_0^\infty \frac{\rho(t)}{\varrho(t)}\td t-\int_0^\infty\frac{t\rho(t)}{\varrho(t)}\td t\\
&=\int_0^\infty\biggl[\int_0^\infty \frac{u^2\rho(u)}{\varrho(u)}e^{-ut}\td u\biggr]e^{-zt}\td t + z\int_0^\infty \frac{\rho(t)}{\varrho(t)}\td t-\int_0^\infty\frac{t\rho(t)}{\varrho(t)}\td t.
\end{align*}
Further considering $\lim_{x\to0^+}\frac1{H(x)}=0$ and simplifying leads to the integral representation~\eqref{frac1H(z)-int}.
The proof of Theorem~\ref{third-conj-thm} is complete.
\end{proof}

\begin{proof}[Proof of Theorem~\ref{third-conj-thm-conseq}]
The properties $h(x),H(x)\in\mathcal{C}[(0,\infty)]$ follow immediately from the integral representation~\eqref{h(z)-int-representation}.
\par
The integral representation~\eqref{H(z)-stieljes-int} is a rearrangement of~\eqref{zH(z)-int-exp}.
\par
The integral representation~\eqref{zH(z)-int-exp} shows that $\cmdeg{x}H(x)\ge1$. On the other hand, if $x^\alpha H(x)\in\mathcal{C}[(0,\infty)]$, then its first derivative is non-negative, that is,
\begin{equation*}
\alpha\le-\frac{xH'(x)}{H(x)}=\frac{x \bigl(x^2+2 x-1\bigr) \ln \frac{x(x+1)}{x^2+1}+\bigl(x^2-2 x-1\bigr) \ln\frac{x^2+1}{x+1}}{(x+1) \bigl(x^2+1\bigr) \ln\frac{x (x+1)}{x^2+1} \ln\frac{x^2+1}{x+1}}\to1
\end{equation*}
as $x\to0^+$. This implies that $\cmdeg{x}H(x)\le1$. Hence, the degree in~\eqref{H(x)-deg=1-eq} holds.
\par
The property $\frac1{H(x)}\in\mathcal{B}[(0,\infty)]$ follows readily from~\eqref{frac1H(z)-int}. The property $H(x)\in\mathcal{L}[(0,\infty)]$ may be conclude from~\cite[pp.~161\nobreakdash--162, Theorem~3]{Chen-Qi-Srivastava-09.tex} which reads that if $f\in\mathcal{B}[I]$ then $\frac1f\in\mathcal{L}[I]$ for any interval $I\subseteq\mathbb{R}$.
\par
Comparing~\eqref{dfn-stieltjes} with~\eqref{zH(z)-int-exp} acquires $xH(x)\in\mathcal{S}$. By~\cite[Theorem~7.3]{Schilling-Song-Vondracek-2nd}, it follows that $\frac1{xH(x)}\in\mathcal{CB}$. By the integral representation~\eqref{frac1z2H(z)-int}, it follows that
\begin{align*}
\frac1{zH(z)}&=\int_0^\infty\frac{\rho(t)}{\varrho(t)}\frac{z}{t+z}\td t \\ &=\int_0^\infty\frac{t\rho(t)}{\varrho(t)}\biggl(\frac1t-\frac{1}{t+z}\biggr)\td t\\
&=\int_0^\infty\frac{t\rho(t)}{\varrho(t)} \biggl[\int_0^\infty e^{-tu}\td u-\int_0^\infty e^{-(t+z)u}\td u\biggr]\td t\\
&=\int_0^\infty\biggl[\int_0^\infty\frac{t\rho(t)}{\varrho(t)} e^{-tu}\td t\biggl]\bigl(1-e^{-zu}\bigr)\td u.
\end{align*}
L\'evy-Khintchine representation~\eqref{zH(z)-CB-int-eq} follows.
\par
It is obvious that the density of L\'evy-Khintchine representation~\eqref{zH(z)-CB-int-eq} is
\begin{equation*}
m(t)=\int_0^\infty \frac{u\rho(u)}{\varrho(u)}e^{-ut}\td u\in\mathcal{C}[(0,\infty)].
\end{equation*}
If $t^\alpha m(t)\in\mathcal{C}[(0,\infty)]$, then, as discussed above,
\begin{equation*}
\alpha\le-\frac{tm'(t)}{m(t)}=\frac{t\int_0^\infty \frac{u^2\rho(u)}{\varrho(u)}e^{-ut}\td u} {\int_0^\infty \frac{u\rho(u)}{\varrho(u)}e^{-ut}\td u}\to0
\end{equation*}
as $t\to0^+$. This implies the degrees in~\eqref{xH(x)-cb-deg-eq} and~\eqref{xH(x)-st-deg-eq}.
\par
The properties $\frac1{x^2H(x)}\in\mathcal{S}$ and $x^2H(x)\in\mathcal{CB}$ follow from~\eqref{frac1z2H(z)-int} and~\cite[Theorem~7.3]{Schilling-Song-Vondracek-2nd}. By~\eqref{zH(z)-int-exp}, we have
\begin{equation*}
z^2H(z)=\int_0^\infty\rho(t)\frac{z}{t+z}\td t
=\int_0^\infty\biggl[\int_0^\infty{t\rho(t)}e^{-tu}\td t\biggl]\bigl(1-e^{-zu}\bigr)\td u.
\end{equation*}
L\'evy-Khintchine representation~\eqref{z^2H(z)-int-eq} is established.
\par
The degrees appeared in~\eqref{x^2H(x)-cb-deg-eq} and~\eqref{x^2H(x)-st-deg-eq} may be calculated by the same argument as in the proof of the degrees in~\eqref{xH(x)-cb-deg-eq}.
\par
By virtue of the second inclusion in~\eqref{S-L-C-relation}, the logarithmically complete monotonicity $xH(x)\in\mathcal{L}[(0,\infty)]$ and $\frac1{x^2H(x)}\in\mathcal{L}[(0,\infty)]$ may be derived from the properties $xH(x)\in\mathcal{S}$ and $\frac1{x^2H(x)}\in\mathcal{S}$ respectively.
\par
All of positive operator monotonicity may be deduced from the property that they are complete Bernstein functions by available of~\cite[Theorem~12.17]{Schilling-Song-Vondracek-2nd} recited on page~\pageref{Theorem.12.17-SV2} and before Definition~\ref{deg-stieltjes-dfn}.
The proof of Theorem~\ref{third-conj-thm-conseq} is complete.
\end{proof}

\begin{rem}
This paper is a slightly revised version of the preprints~\cite{frac-lnz-conj-corr.tex, frac-lnz-conj-corr.tex-Dataset}.
\end{rem}

\end{document}